\newcommand{\R}{\ensuremath{\mathbb{R}}}
\newcommand{\N}{\ensuremath{\mathbb{N}}}
\newcommand{\cF}{\mathcal{F}}
\newcommand{\mP}{\mathbb{P}}
\newcommand{\ltn}{\ensuremath{\left| \! \left| \! \left|}}
\newcommand{\rtn}{\ensuremath{\right| \! \right| \! \right|}}
\begin{document}

\title*{Young differential delay equations driven by H\"older continuous paths}
\author{Luu Hoang Duc and Phan Thanh Hong}
\institute{Luu Hoang Duc  \at Max Planck Institute for Mathematics in the Sciences, 04103 Leipzig, Germany, \& \\ Institute of Mathematics, Vietnam Academy of Science and Technology, 10307 Hanoi, Vietnam, \email{ duc.luu@mis.mpg.de, lhduc@math.ac.vn}
\and Phan Thanh Hong \at Thang Long University, 128200 Hanoi, Vietnam, \email{hongpt@thanglong.edu.vn }}
%
%
\maketitle

\abstract*{Each chapter should be preceded by an abstract (10--15 lines long) that summarizes the content. The abstract will appear \textit{online} at \url{www.SpringerLink.com} and be available with unrestricted access. This allows unregistered users to read the abstract as a teaser for the complete chapter. As a general rule the abstracts will not appear in the printed version of your book unless it is the style of your particular book or that of the series to which your book belongs.
Please use the 'starred' version of the new Springer \texttt{abstract} command for typesetting the text of the online abstracts (cf. source file of this chapter template \texttt{abstract}) and include them with the source files of your manuscript. Use the plain \texttt{abstract} command if the abstract is also to appear in the printed version of the book.}

\abstract{In this paper we prove the existence and uniqueness of the solution of Young differential delay equations under weaker conditions than it is known in the literature. We also prove the continuity and differentiability of the solution with respect to the initial function and give an estimate for the growth of the solution. The proofs use techniques of stopping times, Shauder-Tychonoff fixed point theorem and a Gronwall-type lemma. }

\section{Introduction}

In this paper we would like to study the deterministic delay equation of the differential form
\begin{eqnarray}\label{Eq1}
dx(t)&=&f(x_t)dt+g(x_t)d\omega(t), \qquad t \in [0,T] \\
x_0&=&\eta\in C_r: = C([-r,0],\R^d) \notag
\end{eqnarray}
or in the integral form
\begin{eqnarray}\label{Eq1int}
x(t)&=&x(0)+ \int_0^t f(x_s)ds+ \int_0^t g(x_s)d\omega(s), \qquad t\in [0,T] \\
x_0&=&\eta\in C_r\notag
\end{eqnarray}
for some fixed time interval $[0,T]$, where $C([a,b],\R^d)$ denote the space of all continuous paths $x:\;[a,b] \to \R^d$ equipped with sup norm $\|x\|_{\infty,[a,b]}=\sup_{t\in [a,b]} \|x(t)\|$,  with $\|\cdot\|$ is the Euclidean norm in $\R^d$, $x_t\in C_r$ is defined by $x_t(u):=x(t+u)$ for all $u\in [-r,0]$; $f,g: C_r\to\R^d$ are coefficient functions; and $\omega$ belongs to $C^{\nu\rm{-Hol}}([0,T],\R)$ - the space of H\"older continuous paths for index $\nu > \frac{1}{2}$. Such system appears, for example, while solving stochastic differential equations of the form
\begin{equation}\label{fSDDE}
dx(t)=f(x_t)dt+g(x_t)dB^H(t), \qquad x_0=\eta\in C_r,
\end{equation}
where $B^H$ is a fractional Brownian motion defined on a complete probability space $(\Omega,\cF,\mP)$ with the Hurst index $H \in (1/2,1)$ \cite{mandelbrot}. Since $B^H$ is neither Markov nor semimartingale if $H \ne \frac{1}{2}$, one cannot apply the classical Ito theory to solve \eqref{fSDDE}. Instead, due to the fact that $B^H(\cdot)$ is H\"older continuous for almost surely all the realizations, one can define the stochastic integral w.r.t.\ the fBm as the integral driven by a H\"older continuous path using the so called {\it rough path theory} \cite{friz}, \cite{lyons98}, \cite{lyonsqian}, \cite{lyonsetal}, or {\it fractional calculus theory} \cite{samko}, \cite{zahle}. As a result, solving \eqref{fSDDE} leads to the deterministic equation \eqref{Eq1} or \eqref{Eq1int}, where the second integral in \eqref{Eq1int} is understood in the Young sense (see \cite{lyons94},  \cite{young}). 

The theory of stochastic differential equations driven by the fBm $B^H$ for $H>\frac{1}{2}$ has been well developed by many authors, especially results on existence and uniqueness of the pathwise solution, the generation of random dynamical systems (see e.g.  \cite{chen}, \cite{congduchong}, \cite{atienza2},  \cite{lejay}, \cite{lyons94},   \cite{nualart2}, \cite{nualart3}, \cite{young},... and the references therein). For studies on delay equations, we refer to \cite{bou1}, \cite{bou2}, \cite{bou3},  \cite{duc-siegmund-schmalfuss}.

In the general case where $f,g$ are functions of $(t,x_t)$, under some regularity conditions, i.e. $f$ is globally Lipschitz continuous and of linear growth, $g$  is $C^1$ such that its Frechet derivative is bounded and globally Lipschitz continuous, there exists a unique solution $x(\cdot,\omega,\eta)$ of \eqref{Eq1} (see \cite{bou1} or \cite{shev}). These results are based on the tools of fractional calculus developed in \cite{nualart3}, \cite{zahle}, \cite{zahle2}. \\
In this paper, we reprove the existence and uniqueness theorem of \eqref{Eq1} under the following assumptions.

(${\textbf H}_f$) The function $f$ is globally Lipschitz continuous and thus has linear growth, i.e  there exist constants $L_f$ such that for all $\xi,\eta\in C_r$
\[
\|f(\xi)-f(\eta)\|\leq L_f\|\xi-\eta\|_{\infty,[-r,0]} 
\]

(${\textbf H}_g$) The function $g$ is $C^1$ such that its Frechet derivative is bounded and locally $\delta-$H\"older continuous with $1\geq \delta >\frac{1-\nu}{\nu}$, i.e there exists $L_g$ such that for all $\xi,\eta\in C_r$
\[
\|Dg(\xi)\|_{L(C_r,\R^d)}\leq L_g 
\]
and for each $M>0$, there exists $L_M$ such that for all $\xi,\eta\in C_r$ that satisfy 
\[
\|\xi\|_{\infty,[-r,0]},\|\eta\|_{\infty,[-r,0]}\leq M
\] 
one has
\begin{equation}\label{Holderiv}
\|Dg(\xi)- Dg(\eta)\|_{L(C_r,\R^d)}\leq L_M\|\xi-\eta\|^\delta_{\infty,[-r,0]}
\end{equation}
for some constant $1>\delta >\frac{1-\nu}{\nu}$. Assumption \eqref{Holderiv} is weaker than the global Lipschitz continuity of $Dg$, as seen in \cite{bou1}, \cite{duc-siegmund-schmalfuss} or \cite{shev}. \\
Furthermore, we show that the solution is differentiable with respect to the initial function $\eta$ and give an estimate for the growth of the solution. Note that in order to define the second integral in \eqref{Eq1int} in the Young sense, one needs to consider the solution $x$ and the initial function $\eta$ in H\"older function spaces $C^{\beta\text{-Hol}}$ with $\beta +\nu >1$. 

To finish the introduction, we recall some facts about Young integral, more details can be seen in \cite{friz}. For $p\geq 1$ and $[a,b] \subset \R$, a continuous path $x:[a,b] \to \R^d$ is of finite $p-$variation if 
\begin{eqnarray}
\ltn x\rtn_{p\text{-var},[a,b]} :=\left(\sup_{\Pi(a,b)}\sum_{i=1}^n\|x(t_{i+1})-x(t_i)\|^p\right)^{1/p} < \infty,
\end{eqnarray}
where the supremum is taken over the whole class of finite partition of $[a,b]$. The subspace $C^{p-\text{var}}([a,b],\R^d)\subset C([a,b],\R^d)$ of all paths  $x$ with finite $p-$variation and equipped with the $p-$var norm
\begin{eqnarray*}
	\|x\|_{p\text{-var},[a,b]}&:=& \|x(a)\|+\ltn x\rtn_{p\text{-var},[a,b]},
\end{eqnarray*}
is a nonseparable Banach space \cite[Theorem 5.25, p.\ 92]{friz}.\\
Also, for $0<\beta \leq 1$ denote by $C^{\beta\rm{-Hol} }([a,b],\R^d)$ the Banach space of all H\"older continuous paths $x:[a,b]\to \R^d$ with exponent $\beta$, equipped with the norm
\begin{eqnarray}
\|x\|_{\infty,\beta,[a,b]} &:=& \|x\|_{\infty,[a,b]} + \ltn x\rtn_{\beta,[a,b]}\ \text{where}\notag\\
\ltn x\rtn_{\beta,[a,b]}&:=&  \sup_{a\leq s<t\leq b} \frac{\|x(t)-x(s)\|}{(t-s)^{\beta}}< \infty.
\end{eqnarray}
Note that the space is not separable. However, the closure of $C^{\infty}([a,b],\R^d)$ in the $\beta-$ Holder norm denoted by $C^{0,\beta\rm{-Hol}}([a,b],\R^d)$ is a separable space (see \cite[Theorem 5.31, p. 96]{friz}), which can be defined as
\[
C^{0,\beta\rm{-Hol}}([a,b],\R^d) := \Big\{x \in C^{\beta\text{-Hol}}([a,b],\R^d) \Big|\lim \limits_{h \to 0} \sup_{a\leq s<t\leq b, |t-s|\leq h} \frac{\|x(t)-x(s)\|}{(t-s)^{\beta}} = 0 \Big\}.
\]
Clearly, if $x\in C^{\beta\rm{-Hol}\ }([a,b],\R^d)$ then for all $s,t\in [a,b]$ we have
\begin{eqnarray*}
	\|x(t)-x(s)\|&\leq& \ltn x\rtn_{\beta,[a,b]} |t-s|^\beta.
\end{eqnarray*}
Hence, for all $p$ such that $p\beta\geq 1$ we have
\begin{eqnarray}\label{pvar-Hol.norm}
\ltn x\rtn_{p\text{-var},[a,b]}&\leq& \ltn x\rtn_{\beta,[a,b]} (b-a)^\beta<\infty.
\end{eqnarray} 
In particular, $C^{1/p\text{-Hol} }([a,b],\R^d)\subset C^{p-\text{var}}([a,b],\R^d)$. \\
For $a,b,c\in\R$ such that $a<b<c$ and $x \in C^{\beta\text{-Hol}}([a,c],\R^d)$,  it is easy to see that 
$$
\ltn x\rtn_{\beta,[a,c]} \leq \ltn x\rtn_{\beta,[a,b]}+\ltn x\rtn_{\beta,[b,c]}.
$$
Now consider $x \in C^{\beta\text{-Hol}}([a,b],\R^d)$ and $\omega \in C^{\nu\text{-Hol}}([a,b],\R)$ with $\beta+\nu>1$. Then by \eqref{pvar-Hol.norm}, $x \in C^{\frac{1}{\beta}\text{-var}}([a,b],\R^d)$ and $\omega \in C^{\frac{1}{\nu}\text{-var}}([a,b],\R)$, thus it is well known that the Young integral $\int_a^bx(t)d\omega(t)$ exists (see \cite[p.\ 264-265]{friz}). Moreover, for all $s\leq t$ in $[a,b]$, due to the Young-Loeve estimate \cite[Theorem 6.8, p.~116]{friz}
\begin{eqnarray*}
	\left\|\int_s^tx(u)d\omega(u) - x(s)[\omega(t)-\omega(s)]\right\| &\leq& K \ltn \omega \rtn_{\frac{1}{\nu}\text{-var},[s,t]} \ltn x \rtn_{\frac{1}{\beta}\text{-var},[s,t]} \\
	&\leq& K (t-s)^{\beta+\nu}\ltn \omega \rtn_{\nu,[s,t]} \ltn x \rtn_{\beta,[s,t]},
\end{eqnarray*}
where $K:= \frac{1}{1-2^{1-(\beta+\nu)}}$. Hence
\begin{equation}
\left\|\int_s^tx(u)d\omega(u)\right\| \leq (t-s)^\nu\ltn\omega\rtn_{\nu,[s,t]} \left(\|x(s)\|+K(t-s)^{\beta}\ltn x\rtn_{\beta,[s,t]} \right).
\end{equation}

\section{Existence, uniqueness and continuity of the solution}

Since $\delta \nu + \nu >1$, there exists $\beta < \nu$ such that
\[
\beta+\nu > \beta\delta +\nu>1.
\]
By choosing a smaller $\nu' \in (\frac{1}{2},\nu)$ if necessary, we can always assume without loss of generality that $\omega\in C^{0,\nu\rm{-Hol}}([0,T],\R)$. System \eqref{Eq1} would then be considered for $\eta \in C^{\beta\rm{-Hol}}([-r,0],\R^d)$, i.e. we consider the equation 
\begin{eqnarray}\label{Eq2}
dx(t)&=&f(x_t)dt+g(x_t)d\omega(t), \qquad t\in [0,T]\\
x_0&=&\eta\in C^{\beta\rm{-Hol}}([-r,0],\R^d). \notag
\end{eqnarray}

\begin{lemma}\label{lem1}
	If $x\in C^{\beta\rm{-Hol}}([a-r,b],\R^d)$ then the function $x_{.}:[a,b]\rightarrow C_r$, $x_t(\cdot)= x(t+\cdot)$ belongs to $C^{\beta\rm{-Hol}}([a,b],C_r)$ and satisfies
	\begin{eqnarray}
	&i,& \ltn x_{.} \rtn_{\beta,[a,b]} \leq \ltn x \rtn_{\beta,[a-r,b]} \label{lem1eq1}\\
	&ii,&\| x_{.} \|_{\infty,\beta,[a,b]} \leq \| x \|_{\infty,\beta,[a-r,b]}. \label{lem1eq2}
	\end{eqnarray}
\end{lemma}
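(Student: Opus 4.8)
The plan is to reduce both inequalities to the single defining Hölder bound of $x$ on the full interval $[a-r,b]$, exploiting the fact that the shift operation $t\mapsto x_t$ preserves time increments.

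First I would establish the pointwise estimate that underlies everything. Fix $a\le s<t\le b$ and $u\in[-r,0]$. The key observation is that both $t+u$ and $s+u$ lie in $[a-r,b]$, while their difference $(t+u)-(s+u)=t-s$ is independent of $u$. The Hölder property of $x$ on $[a-r,b]$ then gives
\[
\|x_t(u)-x_s(u)\| = \|x(t+u)-x(s+u)\| \le \ltn x\rtn_{\beta,[a-r,b]}\,(t-s)^\beta .
\]
Since the right-hand side does not depend on $u$, I can take the supremum over $u\in[-r,0]$ to obtain $\|x_t-x_s\|_{\infty,[-r,0]}\le \ltn x\rtn_{\beta,[a-r,b]}(t-s)^\beta$. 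Dividing by $(t-s)^\beta$ and taking the supremum over $a\le s<t\le b$ yields simultaneously the continuity of $x_\cdot$ as a $C_r$-valued path, its membership in $C^{\beta\rm{-Hol}}([a,b],C_r)$, and inequality \eqref{lem1eq1}.

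For the full Hölder norm in part (ii), it remains to control the sup-norm term $\|x_\cdot\|_{\infty,[a,b]}=\sup_{t\in[a,b]}\|x_t\|_{\infty,[-r,0]}$. Here I would note that $\|x_t\|_{\infty,[-r,0]}=\sup_{u\in[-r,0]}\|x(t+u)\|$, and as $(t,u)$ ranges over $[a,b]\times[-r,0]$ the point $t+u$ ranges exactly over $[a-r,b]$; hence $\|x_\cdot\|_{\infty,[a,b]}=\|x\|_{\infty,[a-r,b]}$. Adding this identity to \eqref{lem1eq1} gives $\|x_\cdot\|_{\infty,\beta,[a,b]}\le\|x\|_{\infty,\beta,[a-r,b]}$, which is \eqref{lem1eq2}.

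The argument is essentially bookkeeping, so I do not expect a serious obstacle. The only point needing care is the legitimacy of pulling the supremum over the delay variable $u$ through the Hölder estimate; this is valid precisely because the bound $\ltn x\rtn_{\beta,[a-r,b]}(t-s)^\beta$ is uniform in $u$. I would also verify that the shifted arguments $s+u$ and $t+u$ never leave $[a-r,b]$, which is guaranteed by $u\in[-r,0]$ together with $s,t\in[a,b]$.
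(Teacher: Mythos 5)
Your proposal is correct and follows essentially the same route as the paper's proof: both reduce \eqref{lem1eq1} to the observation that $s+u,\,t+u\in[a-r,b]$ with increment $(t+u)-(s+u)=t-s$ independent of $u$, and then obtain \eqref{lem1eq2} by adding the sup-norm bound $\sup_{t\in[a,b]}\|x_t\|_{\infty,[-r,0]}\le\|x\|_{\infty,[a-r,b]}$ (which you sharpen to an equality, a harmless refinement). No gaps.
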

\begin{proof}
	The fact that
	\begin{eqnarray*}
		\ltn x_{.} \rtn_{\beta,[a,b]} &=& \sup_{a\leq s<t \leq b}\frac{\|x_t-x_s\|_{\infty,[-r,0]}}{(t-s)^\beta} \\
		&=&\sup_{a\leq s<t \leq b}\quad\sup_{-r\leq u\leq 0}\frac{\|x(t+u)-x(s+u)\|}{[(t+u)-(s+u)]^\beta}\notag\\
		&\leq & \sup_{a-r\leq s'<t'\leq b}\frac{\|x(t')-x(s')\|}{(t'-s')^\beta}=\ltn x \rtn_{\beta,[a-r,b]} \\
	\end{eqnarray*}
	proves \eqref{lem1eq1}. As a result,
	\begin{eqnarray*}
		\| x_{.} \|_{\infty,\beta,[a,b]}&=& \sup_{t\in [a,b]}\|x_t\|_{\infty,[-r,0]}+\ltn x_{.} \rtn_{\beta,[a,b]}\\
		& \leq& \|x(\cdot)\|_{\infty,[a-r,b]}+ \ltn x \rtn_{\beta,[a-r,b]},
	\end{eqnarray*}
	which proves \eqref{lem1eq2}.
\end{proof}

\begin{remark}
	The lemma is not true if we replace the H\"older continuous space by $p-$variation bounded space. Namely, if a function $x$ belongs to $C^{p\rm{-var}}([a-r,b],\R^d)$, it does not follow that its translation function $x_\cdot$ belongs to $C^{p\rm{-var}}([a,b],C_r)$  with $p\geq 1$. As a counterexample, consider the function $x(t)=|t|^\beta$, $t\in [-1,1]$, $\beta p <1$ then $x\in C^{p\rm{-var}}([-1,1],\R)$. However,
	with the partition $\Pi = 0< \frac{1}{n}<\frac{2}{n}<\dots < \frac{n-1}{n} < 1$ we have
	\begin{eqnarray*}
		\left(\sum_i \|x_{\frac{i+1}{n}} -x_{\frac{i}{n}} \|^{p}_{\infty,[-1,0]}\right)^{1/p} &=&\left(\sum_i \sup_{-1\leq u\leq 0}\Big|x(\frac{i+1}{n}+u) -x(\frac{i}{n}+u) \Big|^{p}\right)^{1/p}\\
		&\geq & \left(\sum_i \Big|x(\frac{i+1}{n}-\frac{i}{n} ) -x(\frac{i}{n}-\frac{i}{n}) \Big|^{p}\right)^{1/p}\\
		&\geq & \left(\sum_i \frac{1}{n^{\beta p}}\right)^{1/p}\\
		&=&n^{\frac{1-\beta p}{p}} \to \infty,\;\; \text{as} \;\; n\to \infty.
	\end{eqnarray*}
	This shows that $x_.$ is not of bounded $p-$variation.
\end{remark}

\begin{lemma}\label{lem2}
	Assume that $g$ satisfies the condition $({\textbf H}_g)$. If $x\in C^{\beta\rm{-Hol}}([a-r,b],\R^d)$  then $g(x_.)\in C^{\beta\rm{-Hol}}([a,b],\R^d)$ and 
	\begin{eqnarray}
	\ltn g(x_{.} )\rtn_{\beta,[a,b]} \leq L_g\ltn x \rtn_{\beta,[a-r,b]}.
	\end{eqnarray}
\end{lemma}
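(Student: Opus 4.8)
The plan is to reduce the statement to the global Lipschitz continuity of $g$ on $C_r$, which follows from the boundedness of its Fréchet derivative, and then to combine this with the Hölder estimate for the translated path $x_\cdot$ supplied by Lemma \ref{lem1}.

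First I would establish that $g$ is globally Lipschitz with constant $L_g$. Since $g$ is $C^1$ with $\|Dg(\xi)\|_{L(C_r,\R^d)} \le L_g$ for all $\xi \in C_r$, the mean value inequality for Fréchet differentiable maps between Banach spaces gives, for any $\xi,\eta \in C_r$,
\begin{equation*}
\|g(\xi) - g(\eta)\| \le \left( \sup_{0 \le \theta \le 1} \|Dg(\eta + \theta(\xi - \eta))\|_{L(C_r,\R^d)} \right) \|\xi - \eta\|_{\infty,[-r,0]} \le L_g \|\xi - \eta\|_{\infty,[-r,0]}.
\end{equation*}
One obtains this by writing $g(\xi) - g(\eta) = \int_0^1 Dg(\eta + \theta(\xi-\eta))(\xi - \eta)\,d\theta$ and bounding the integrand uniformly by $L_g\|\xi-\eta\|_{\infty,[-r,0]}$.

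Next, by Lemma \ref{lem1}, the hypothesis $x \in C^{\beta\text{-Hol}}([a-r,b],\R^d)$ ensures that $x_\cdot \in C^{\beta\text{-Hol}}([a,b],C_r)$ with $\ltn x_\cdot \rtn_{\beta,[a,b]} \le \ltn x \rtn_{\beta,[a-r,b]}$. Combining the two facts, for every $a \le s < t \le b$ I would estimate
\begin{equation*}
\|g(x_t) - g(x_s)\| \le L_g \|x_t - x_s\|_{\infty,[-r,0]} \le L_g \ltn x_\cdot \rtn_{\beta,[a,b]} (t-s)^\beta \le L_g \ltn x \rtn_{\beta,[a-r,b]} (t-s)^\beta.
\end{equation*}
Dividing by $(t-s)^\beta$ and taking the supremum over $a \le s < t \le b$ yields simultaneously that $g(x_\cdot) \in C^{\beta\text{-Hol}}([a,b],\R^d)$ and the claimed bound $\ltn g(x_\cdot) \rtn_{\beta,[a,b]} \le L_g \ltn x \rtn_{\beta,[a-r,b]}$.

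There is no serious obstacle here: the argument amounts to the observation that the composition of a $\beta$-Hölder map into $C_r$ with a Lipschitz map out of $C_r$ is again $\beta$-Hölder, the Lipschitz constant multiplying the Hölder seminorm. The only point deserving a line of justification is the passage from the bounded-derivative hypothesis to global Lipschitz continuity, which uses the Banach-space mean value inequality rather than the one-dimensional mean value theorem. Note that the local $\delta$-Hölder continuity of $Dg$ in $(\mathbf{H}_g)$ plays no role in this particular lemma and is only needed for the later estimates.
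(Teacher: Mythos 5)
Your proof is correct and follows the same route as the paper: deduce the global Lipschitz continuity of $g$ with constant $L_g$ from the bounded Fr\'echet derivative, then combine it with the estimate $\ltn x_\cdot \rtn_{\beta,[a,b]} \le \ltn x \rtn_{\beta,[a-r,b]}$ from Lemma \ref{lem1}. The paper's proof is just a terser version of the same two-line computation, taking the Lipschitz property of $g$ for granted.
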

\begin{proof}
	The proof is directed from the Lipschitz continuity of $g$ and Lemma \ref{lem1}. Namely, 
	\begin{eqnarray*}
		\sup_{a\leq s<t\leq b}\frac{\|g(x_t)-g(x_s)\|}{(t-s)^{\beta}} &\leq& \sup_{a\leq s<t\leq b} L_g\frac{\|x_t-x_s\|_{\infty,[-r,0]}}{(t-s)^{\beta}}\\
		&\leq & L_g \ltn x \rtn_{\beta,[a-r,b]}.
	\end{eqnarray*}
\end{proof}


\begin{remark}
	Since $\beta+\nu>1$ the integral $\int_a^bg(x_t)d\omega(t)$ is well defined.
\end{remark}


\begin{lemma}\label{lem3}
	Assume that $g$ satisfies the condition $({\textbf H}_g)$. If $x,y\in C^{\beta\rm{-Hol}}([a-r,b],\R^d)$ are such that $\|x\|_{\infty,\beta,[a-r,b]},\|y\|_{\infty,\beta,[a-r,b]}\leq M$, then 
	\begin{eqnarray}\label{lem3eq1}
	\ltn g(x_.)-g(y_.)\rtn_{\delta\beta,[a,b]} &\leq& L_g(b-a)^{\beta-\delta\beta}\ltn x-y\rtn_{\beta,[a-r,b]}+L_M M^{\delta}\|x-y\|_{\infty,[a-r,b]}\notag\\
	&\leq& \Big(L_g(b-a)^{\beta-\delta\beta} + L_M M^{\delta}\Big) \|x-y\|_{\infty,\beta,[a-r,b]}
	\end{eqnarray}
\end{lemma}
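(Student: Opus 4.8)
The plan is to estimate the $\delta\beta$-Hölder seminorm of $g(x_.)-g(y_.)$ by expressing each increment $g(x_t)-g(y_t)$ through the integral form of the mean value theorem, which is the natural device when $g$ is only $C^1$ with H\"older-continuous derivative. Writing $h_.:=x_.-y_.$ and $z_.^\theta:=(1-\theta)y_.+\theta x_.$ for $\theta\in[0,1]$, I would start from
\[
g(x_t)-g(y_t)=\int_0^1 Dg(z_t^\theta)\,h_t\,d\theta,
\]
so that the bracketed increment splits into a ``derivative times increment of $h$'' part and an ``increment of the derivative times $h$'' part:
\[
[g(x_t)-g(y_t)]-[g(x_s)-g(y_s)]=\int_0^1\Big(Dg(z_t^\theta)(h_t-h_s)+[Dg(z_t^\theta)-Dg(z_s^\theta)]h_s\Big)d\theta.
\]

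First I would record the a priori control on the interpolating paths. Since $z_t^\theta$ is a convex combination of $x_t$ and $y_t$, Lemma~\ref{lem1} together with the hypothesis $\|x\|_{\infty,\beta,[a-r,b]},\|y\|_{\infty,\beta,[a-r,b]}\le M$ yields both $\|z_t^\theta\|_{\infty,[-r,0]}\le M$ and the incremental bound $\|z_t^\theta-z_s^\theta\|_{\infty,[-r,0]}\le M(t-s)^\beta$, uniformly in $\theta$. The first estimate is precisely what is needed to invoke the \emph{local} H\"older condition \eqref{Holderiv} for $Dg$ on the ball of radius $M$.

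The two terms are then handled separately. For the first term I would use $\|Dg(z_t^\theta)\|_{L(C_r,\R^d)}\le L_g$ and Lemma~\ref{lem1} to obtain $\|Dg(z_t^\theta)(h_t-h_s)\|\le L_g\ltn x-y\rtn_{\beta,[a-r,b]}(t-s)^\beta$; dividing by $(t-s)^{\delta\beta}$ and bounding $(t-s)^{\beta-\delta\beta}\le(b-a)^{\beta-\delta\beta}$ gives the first summand. For the second term I would apply \eqref{Holderiv} with the a priori bounds above to get $\|Dg(z_t^\theta)-Dg(z_s^\theta)\|_{L(C_r,\R^d)}\le L_M M^\delta(t-s)^{\delta\beta}$, then multiply by $\|h_s\|_{\infty,[-r,0]}\le\|x-y\|_{\infty,[a-r,b]}$; dividing by $(t-s)^{\delta\beta}$ gives the second summand. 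As every bound is uniform in $\theta$, integrating over $[0,1]$ leaves them unchanged, and taking the supremum over $a\le s<t\le b$ produces the first inequality. The second inequality is immediate from $\ltn x-y\rtn_{\beta,[a-r,b]}\le\|x-y\|_{\infty,\beta,[a-r,b]}$ and $\|x-y\|_{\infty,[a-r,b]}\le\|x-y\|_{\infty,\beta,[a-r,b]}$.

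The only genuine obstacle is the opening bookkeeping step: I must check that the whole segment $z_t^\theta$ joining $y_t$ to $x_t$ stays inside the ball of radius $M$, so that the local estimate \eqref{Holderiv} is legitimate for every $\theta$, and that the incremental bound for $z_t^\theta$ keeps the full factor $(t-s)^\beta$. Losing that factor would forfeit the gain $(t-s)^{\beta-\delta\beta}$ that turns the first summand into the short-interval-small term $L_g(b-a)^{\beta-\delta\beta}$; everything else is a routine split-and-estimate.
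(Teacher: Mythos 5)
Your proposal is correct and follows essentially the same route as the paper: the integral mean value theorem along the segment $z_t^\theta=\theta x_t+(1-\theta)y_t$, the split of the double increment into a ``$Dg$ times increment of $x-y$'' term (controlled by $L_g$) and an ``increment of $Dg$ times $x-y$'' term (controlled by the local $\delta$-H\"older bound on the ball of radius $M$), yielding exactly the two summands of \eqref{lem3eq1}. The only cosmetic difference is that the paper bounds $\|z_t^\theta-z_s^\theta\|^\delta_{\infty,[-r,0]}$ via the convexity estimate $\theta\|x_t-x_s\|^\delta_{\infty,[-r,0]}+(1-\theta)\|y_t-y_s\|^\delta_{\infty,[-r,0]}$ before taking the maximum of the H\"older seminorms, whereas you bound the increment of $z^\theta$ directly by $M(t-s)^\beta$ and then raise to the power $\delta$; both give $L_MM^\delta(t-s)^{\delta\beta}$.
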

\begin{proof}
	By the mean value theorem 
	\begin{eqnarray*}
		&&|g(x_t)-g(y_t) - g(x_s)+g(y_s)|\\
		&=& \left| \int_0^1Dg(\theta x_t + (1-\theta y_t))(x_t-y_t)d\theta+\int_0^1Dg(\theta x_s + (1-\theta) y_s)(x_s-y_s)d\theta\right|\\
		&\leq&  \left| \int_0^1Dg(\theta x_t + (1-\theta) y_t)[(x_t-y_t)-(x_s-y_s)]d\theta\right|\\
		&& +\left|\int_0^1[Dg(\theta x_t + (1-\theta) y_t)-Dg(\theta x_s + (1-\theta) y_s)](x_s-y_s)d\theta\right|\\
		&\leq&  L_g \|(x_t-y_t)-(x_s-y_s)\|_{\infty,[-r,0]}\\
		&&+ L_M \|x_s-y_s\|_{\infty,[-r,0]}\int_0^1 \left(\theta \|x_t-x_s\|^\delta_{\infty,[-r,0]}+(1-\theta)\|y_t-y_s\|^\delta_{\infty,[-r,0]}\right) d\theta\\
		&\leq&  L_g (t-s)^\beta\ltn x_.-y_.\rtn_{\beta,[a,b]} \\
		&&+ L_M\|x_s-y_s\|_{\infty,[-r,0]} (t-s)^{\delta\beta}\max \left\{\ltn x_.\rtn^\delta_{\beta,[a,b]},\ltn y_.\rtn^\delta_{\beta,[a,b]}\right\}\\
		&\leq&   L_g (t-s)^\beta\ltn x-y\rtn_{\beta,[a-r,b]} + L_M(t-s)^{\delta\beta}M^{\delta}\|x-y\|_{\infty,[a-r,b]}.
	\end{eqnarray*}
	This implies 
	\begin{eqnarray*}
		\ltn g(x_.)-g(y_.)\rtn_{\delta\beta,[a,b]}
		&\leq & L_g(b-a)^{\beta-\delta\beta}\ltn x-y\rtn_{\beta,[a-r,b]}
		+ L_M M^\delta\| x-y\|_{\infty,[a-r,b]}.
	\end{eqnarray*}
\end{proof}
Consider $x\in C^{\beta\rm{-Hol}}([t_0-r,t_1],\R^d)$ with any interval $[t_0,t_1] \subset[0,T]$. 
Put
$$
I(x)(t): = \int_{t_0}^t f(x_s) ds\;\; \text{and}\;\; J(x)(t):= \int_{t_0}^t g(x_s)d\omega(s),\;\; t\in [t_0,t_1]
$$
and define the map
\[F(x)(t)=
\begin{cases}
&  x(t_0) + I(x)(t)+J(x)(t)\quad \text{if}\;\; t\in [t_0,t_1]\\
& x(t) \qquad \qquad \qquad \qquad \qquad \text{if}\;\; t\in [t_0-r,t_0]
\end{cases}
\]
\begin{lemma}\label{lem4}
	If $x,y\in C^{\beta\rm{-Hol}}([a-r,b],\R^d)$ are such that $\|x\|_{\infty,\beta,[a-r,b]}, \|y\|_{\infty,\beta,[a-r,b]}\leq M$, then there exists $L=L(b-a,M)$ satisfying
	\begin{equation}\label{lem4eq1}
	\ltn F(x)-F(y)\rtn_{\beta,[a,b]}\leq  L\left((b-a)^{1-\beta}+ (b-a)^{\nu-\beta}\ltn\omega\rtn_{\nu,[a,b]}\right) \| x-y\|_{\infty,\beta,[a-r,b]}.
	\end{equation}
\end{lemma}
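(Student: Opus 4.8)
The plan is to bound the Hölder seminorm of $F(x)-F(y)$ on $[a,b]$ (with $a=t_0$, $b=t_1$) by treating the drift and the Young integral separately. On $[a,b]$ one has $F(x)(t)-F(y)(t)=x(a)-y(a)+[I(x)(t)-I(y)(t)]+[J(x)(t)-J(y)(t)]$; since the first term is constant it contributes nothing to any increment, so by the triangle inequality
\[
\ltn F(x)-F(y)\rtn_{\beta,[a,b]}\leq \ltn I(x)-I(y)\rtn_{\beta,[a,b]}+\ltn J(x)-J(y)\rtn_{\beta,[a,b]}.
\]
I would then estimate the two seminorms on the right.

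For the drift, fix $a\leq s<t\leq b$ and write the increment as $\int_s^t[f(x_u)-f(y_u)]\,du$. Using $(\mathbf{H}_f)$ together with the elementary bound $\|x_u-y_u\|_{\infty,[-r,0]}\leq\|x-y\|_{\infty,[a-r,b]}$ (valid because $u+v$ ranges in $[a-r,b]$ when $u\in[a,b]$, $v\in[-r,0]$), this increment is at most $L_f(t-s)\|x-y\|_{\infty,[a-r,b]}$. Dividing by $(t-s)^\beta$ and taking the supremum gives $\ltn I(x)-I(y)\rtn_{\beta,[a,b]}\leq L_f(b-a)^{1-\beta}\|x-y\|_{\infty,[a-r,b]}$, which produces the first term in \eqref{lem4eq1}.

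The Young integral is the substantive part. Setting $h:=g(x_.)-g(y_.)$, the increment of $J(x)-J(y)$ over $[s,t]$ equals $\int_s^t h(u)\,d\omega(u)$. The key point is that $h$ is only $\delta\beta$-Hölder (not $\beta$-Hölder), but the running assumption $\delta\beta+\nu>1$ lets me still invoke the Young--Loeve estimate quoted in the introduction with exponent $\delta\beta$ in place of $\beta$:
\[
\left\|\int_s^t h(u)\,d\omega(u)\right\|\leq (t-s)^\nu\ltn\omega\rtn_{\nu,[s,t]}\Big(\|h(s)\|+K(t-s)^{\delta\beta}\ltn h\rtn_{\delta\beta,[s,t]}\Big).
\]
Here $\|h(s)\|=\|g(x_s)-g(y_s)\|\leq L_g\|x-y\|_{\infty,[a-r,b]}$ by the global Lipschitz continuity of $g$ coming from the bound on $Dg$, while Lemma \ref{lem3} controls $\ltn h\rtn_{\delta\beta,[s,t]}\leq\ltn h\rtn_{\delta\beta,[a,b]}\leq(L_g(b-a)^{\beta-\delta\beta}+L_MM^\delta)\|x-y\|_{\infty,\beta,[a-r,b]}$. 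Dividing by $(t-s)^\beta$, bounding $(t-s)^{\nu-\beta}\leq(b-a)^{\nu-\beta}$ (using $\nu>\beta$), $(t-s)^{\delta\beta}\leq(b-a)^{\delta\beta}$ and $\ltn\omega\rtn_{\nu,[s,t]}\leq\ltn\omega\rtn_{\nu,[a,b]}$, then taking the supremum over $s<t$ yields $\ltn J(x)-J(y)\rtn_{\beta,[a,b]}$ bounded by a constant multiple of $(b-a)^{\nu-\beta}\ltn\omega\rtn_{\nu,[a,b]}\|x-y\|_{\infty,\beta,[a-r,b]}$, the second term in \eqref{lem4eq1}.

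Finally I would collect the two estimates and absorb every prefactor ($L_f$, $L_g$, $L_MM^\delta$, $K$, and the remaining powers of $b-a$) into a single constant $L=L(b-a,M)$, giving \eqref{lem4eq1}. The only real obstacle is the loss of regularity in $h$: because $g(x_.)-g(y_.)$ is merely $\delta\beta$-Hölder rather than $\beta$-Hölder, the Young--Loeve bound is applicable only thanks to the hypothesis $\delta>\tfrac{1-\nu}{\nu}$ (equivalently $\delta\beta+\nu>1$), and one must keep careful track of which interval each seminorm lives on, the arguments being measured over $[a-r,b]$ while $g(x_.)-g(y_.)$ is measured over $[a,b]$, as furnished by Lemmas \ref{lem1}--\ref{lem3}.
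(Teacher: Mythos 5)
Your proposal is correct and follows essentially the same route as the paper: split off the drift via the Lipschitz bound on $f$, and handle the Young integral by applying the Young--Loeve estimate at the reduced exponent $\delta\beta$ (legitimate since $\delta\beta+\nu>1$), with Lemma \ref{lem3} supplying the $\delta\beta$-H\"older seminorm of $g(x_.)-g(y_.)$; the resulting constants even match the paper's choice of $L$ in \eqref{eqL} up to the value of the Young--Loeve constant. No gaps.
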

\begin{proof}
	First, observe that
	\begin{eqnarray}\label{lem4eq2}
	\ltn I(x)-I(y)\rtn_{\beta,[a,b]}&=&
	\sup_{a\leq s<t\leq b}\frac{|I(x)(t)-I(y)(t)-I(x)(s)+I(y)(s)|}{(t-s)^\beta}\notag\\
	&\leq &\sup_{a\leq s<t\leq b}\frac{\int_s^t|f(x_u)-f(y_u)|du}{(t-s)^\beta}\notag\\
	&\leq& \sup_{a\leq s<t\leq b}\frac{L_f(t-s)\|x-y\|_{\infty,[a-r,b]}}{(t-s)^\beta}\notag\\
	&\leq& L_f(b-a)^{1-\beta}\|x-y\|_{\infty,[a-r,b]}.
	\end{eqnarray}
	Secondly, since $\nu+\delta\beta>1$, by assigning $K'=\frac{1}{1-2^{1-(\nu+\delta\beta)}}$ and applying Lemma \ref{lem3} one has
	\begin{eqnarray}\label{lem4eq3}
	&& \sup_{a\leq s<t\leq b}\frac{|J(x)(t)-J(y)(t)-J(x)(s)+J(y)(s)|}{(t-s)^\beta} \notag\\
	&\leq &\sup_{a\leq s<t\leq b}\frac{|\int_s^t[g(x_u)-g(y_u)]d\omega(u)|}{(t-s)^\beta}\notag\\
	&\leq& \sup_{a\leq s<t\leq b}\frac{(t-s)^{\nu}\ltn\omega\rtn_{\nu,[s,t]}\left[\|g(x_s)-g(y_s)\|+K'(t-s)^{\delta\beta}\ltn g(x_.)-g(y_.)\rtn_{\delta\beta,[s,t]}\right]}{(t-s)^\beta}\notag\\
	&\leq& (b-a)^{\nu-\beta}\ltn\omega\rtn_{\nu,[a,b]}  \left[ L_g\|x-y\|_{\infty,[a-r,b]}+L_gK'(b-a)^{\beta}\ltn x-y\rtn_{\beta,[a-r,b]}\right.\notag\\
	&&\qquad\qquad\qquad\qquad\qquad\qquad\qquad\left.+K'L_MM^{\delta}(b-a)^{\delta\beta}\|x-y\|_{\infty,[a-r,b]}\right].
	\end{eqnarray}
	Now \eqref{lem4eq1} is followed from \eqref{lem4eq2} and \eqref{lem4eq3} by choosing
	\begin{equation}\label{eqL}
	L = L(b-a,M) := L_f + L_g + L_g K' (b-a)^\beta +  K'L_M M^{\delta} (b-a)^{\delta\beta}.
	\end{equation}
\end{proof}

We can now state the theorem on existence and uniqueness of solution of system \eqref{Eq1}.


\begin{theorem}\label{thm1}
	Assume that $({\textbf H}_f)$ and $({\textbf H}_g)$ are satisfied. If $\eta \in C^{\beta\rm{-Hol}}([-r,0],\R^d)$ then there exists a unique solution to the equation \eqref{Eq2} in $C^{\beta\rm{-Hol}}([-r,T],\R^d)$. Moreover, the solution is $\nu-$H\"older continuous on $[0,T]$.
\end{theorem}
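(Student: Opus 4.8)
The plan is to realise the solution on $[0,T]$ as a fixed point of the map $F$ (taken with $t_0=0$, $t_1=T$ and history $\eta$ on $[-r,0]$), built first on short subintervals and then patched together. The ingredients are already in place: Lemma~\ref{lem4} shows that $F$ is Lipschitz in the $\beta$-Hölder seminorm with constant a small factor $(b-a)^{1-\beta}+(b-a)^{\nu-\beta}\ltn\omega\rtn_{\nu,[a,b]}$ times $L=L(b-a,M)$, while the Young--Loeve estimate from the introduction, together with Lemmas~\ref{lem1}--\ref{lem2}, controls the seminorms of $I(x)$ and $J(x)$. First I would establish local existence on an interval $[t_0,t_1]$ with prescribed history, and here I would use the Schauder--Tychonoff theorem rather than a contraction, precisely because the contraction constant in Lemma~\ref{lem4} carries the factor $L_M M^\delta$ from $(\mathbf{H}_g)$, which grows with the a priori size $M$. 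By contrast, an invariant set for $F$ can be produced using only the bounded-derivative part of $(\mathbf{H}_g)$: since $\|g(\xi)\|\le\|g(0)\|+L_g\|\xi\|_{\infty}$ and $\ltn g(x_.)\rtn_{\beta}\le L_g\ltn x\rtn_{\beta}$, the Young--Loeve estimate gives $\ltn F(x)\rtn_{\beta,[t_0,t_1]}\le C\big(1+\|x\|_{\infty,\beta}\big)\big((t_1-t_0)^{1-\beta}+(t_1-t_0)^{\nu-\beta}\ltn\omega\rtn_{\nu}\big)$, so for a suitable radius $M$ and small $t_1-t_0$ the closed convex set $\{x:\ \ltn x\rtn_{\beta,[t_0,t_1]}\le M,\ x|_{[t_0-r,t_0]}=\text{history}\}$ is mapped into itself. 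Since $F(x)$ is in fact $\nu$-Hölder with $\nu>\beta$, its image is relatively compact in the $\beta$-Hölder topology, and continuity of $F$ is immediate from Lemma~\ref{lem4}; Schauder--Tychonoff then yields a local solution.

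For uniqueness I would return to Lemma~\ref{lem4}: once an a priori bound is available I fix $M$ and shrink $t_1-t_0$ so that $L\big((t_1-t_0)^{1-\beta}+(t_1-t_0)^{\nu-\beta}\ltn\omega\rtn_{\nu,[t_0,t_1]}\big)<1$. Two solutions sharing the same history on $[t_0-r,t_0]$ then satisfy $\ltn x-y\rtn_{\beta}\le(\text{factor})\,\|x-y\|_{\infty,\beta}$, and because $x-y$ vanishes at $t_0$ the sup norm is itself controlled by the seminorm, which forces $x=y$ on the subinterval; iterating the history forward gives uniqueness on all of $[0,T]$.

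The hard part will be to patch the local pieces into a genuine solution on all of $[0,T]$, i.e.\ to guarantee that the stopping times $0=\tau_0<\tau_1<\cdots$ produced by the local step reach $T$ in finitely many steps. This is where a Gronwall-type lemma enters. Using again only the linear-growth part of $(\mathbf{H}_f)$--$(\mathbf{H}_g)$, I would derive the self-referential estimate $\ltn x\rtn_{\beta,[s,t]}\le C(1+\|x\|_{\infty,[t_0-r,t]})\big((t-s)^{1-\beta}+(t-s)^{\nu-\beta}\ltn\omega\rtn_{\nu}\big)$, combine it with $\|x(t)-x(s)\|\le\ltn x\rtn_{\beta}(t-s)^{\beta}$, and feed this into a greedy sequence of times (as in the Hölder/$p$-variation Gronwall argument) to obtain a finite a priori bound $M^\ast$ for $\|x\|_{\infty,\beta,[0,T]}$ depending only on $T$, $\eta$, $\ltn\omega\rtn_{\nu}$ and $L_f,L_g,\|g(0)\|$. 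With $M^\ast$ fixed, $L_{M^\ast}$ becomes a genuine constant, the step length needed for the contraction can be taken uniform (here the reduction to $\omega\in C^{0,\nu\text{-Hol}}$ is convenient, since it makes the local seminorms $\ltn\omega\rtn_{\nu,[\tau_k,\tau_{k+1}]}$ uniformly small), and finitely many steps cover $[0,T]$. The decoupling of the linear-growth bound (which closes the a priori estimate) from the locally-Hölder term $L_M$ (which only enters the contraction) is the point that makes this argument work.

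Finally, the improved $\nu$-Hölder regularity on $[0,T]$ is read off the integral equation once the solution is known to be bounded and $\beta$-Hölder. Indeed $I(x)$ is Lipschitz, hence $\nu$-Hölder, while by the Young--Loeve estimate $\|J(x)(t)-J(x)(s)\|\le(t-s)^{\nu}\ltn\omega\rtn_{\nu}\big(\|g(x_s)\|+K(t-s)^{\beta}\ltn g(x_.)\rtn_{\beta}\big)=O\big((t-s)^{\nu}\big)$; since for $t\ge 0$ there is no contribution from the merely $\beta$-Hölder history, $x=\eta(0)+I(x)+J(x)$ is $\nu$-Hölder on $[0,T]$, as claimed.
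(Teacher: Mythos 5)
Your proposal is correct and follows essentially the same route as the paper: an invariant ball obtained from the linear-growth/bounded-derivative bounds plus Schauder--Tychonoff for local existence (with compactness from the extra H\"older regularity of $F(x)$), a contraction-type estimate from Lemma~\ref{lem4} on a finer subdivision for local uniqueness, greedy stopping times whose defining constant is independent of the local H\"older constant $L_M$ for the global extension, and the integral equation for the $\nu$-H\"older regularity. The decoupling you single out --- linear growth closes the a priori estimate while $L_M$ only enters the uniqueness step --- is exactly the mechanism the paper uses.
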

\begin{proof}
	The proof is divided into several steps.\\
	
	{\bf {Step 1}}: For any $a<b$ in $[0,T]$, one first proves that $F$ is a mapping from $C^{\beta\text{-Hol}}([a-r,b],\R^d)$ into itself, or sufficiently 
	\[
	\ltn F(x)\rtn_{\beta,[a,b]}\leq \ltn I(x)\rtn_{\beta,[a,b]}+ \ltn J(x)\rtn_{\beta,[a,b]} <\infty. 
	\]
	With $a\leq s<t\leq b$, using assumption ($\text{\bf H}_f$) and assigning $L':=\max\{L_f,\|f(0)\|\}$ one has
	\begin{eqnarray*}
		\|I(x)(t)-I(x)(s)\|&=&\Big\|\int_s^tf(x_u)du\Big\|\\
		&\leq & L' (t-s)(1+\|x_.\|_{\infty,[s,t]})\\
		&\leq & L' (t-s)(1+\|x\|_{\infty,[a-r,b]})
	\end{eqnarray*}
	hence 
	\[
	\ltn I(x)\rtn_{\beta,[a,b]}\leq L' (b-a)^{1-\beta} (1+\|x\|_{\infty,[a-r,b]})\leq L' (b-a)^{1-\beta}(1+\|x\|_{\infty,\beta,[a-r,b]}) <\infty.
	\]
	On the other hand, using Lemma \ref{lem2}  with $K=\frac{1}{1-2^{1-(\nu+\beta)}}$, one has
	\begin{eqnarray*}
	&&	\|J(x)(t)-J(x)(s)\|\\
	&&=\Big|\int_s^tg(x_u)d\omega(u)\Big|\\
	&&\leq  \ltn \omega\rtn_{\nu,[a,b]}(t-s)^\nu\left(\|g(x_s)\| + K(t-s)^\beta\ltn g(x_.)\rtn_{\beta,[a,b]}\right)\\
	&&\leq \ltn \omega\rtn_{\nu,[a,b]}(t-s)^\nu\left(\|g(0)\|+ L_g\|x\|_{\infty,[a-r,b]} + L_gK(t-s)^\beta\ltn x\rtn_{\beta,[a-r,b]}\right),
	\end{eqnarray*}
	which implies 
	\begin{eqnarray}
	&&\ltn J(x)\rtn_{\beta,[a,b]}\notag\\
	&&\leq (b-a)^{\nu-\beta} \ltn \omega\rtn_{\nu,[a,b]}\left(\|g(0)\|+ L_g\|x\|_{\infty,[a-r,b]} + L_gK(b-a)^\beta\ltn x\rtn_{\beta,[a-r,b]}\right)\notag\\
	&&\leq (b-a)^{\nu-\beta} \ltn \omega\rtn_{\nu,[a,b]}\left[\|g(0)\|+ L_g+L_g  K(b-a)^\beta\right](1+\| x\|_{\infty,\beta,[a-r,b]})<\infty.\notag
	\end{eqnarray}
	Therefore $\ltn F(x)\rtn_{\beta,[a,b]}$ is finite.
	Moreover, by assigning $a:=t_0, b:= t_1$ it follows from the definition of $F$ that
	\begin{eqnarray}\label{F1}
	&& \| F(x)\|_{\infty,\beta,[t_0-r,t_1]}\notag\\
	&=& \|F(x)\|_{\infty,[t_0-r,t_1]}+\ltn F(x)\rtn_{\beta,[t_0-r,t_1]}\notag\\
	&\leq & \max\Big\{\|F(x)\|_{\infty,[t_0-r,t_0]},\|F(x)\|_{\infty,[t_0,t_1]} \Big\}+\ltn F(x)\rtn_{\beta,[t_0-r,t_0]} +\ltn F(x)\rtn_{\beta,[t_0,t_1]}\notag\\
	&\leq&\max\Big\{\|F(x)\|_{\infty,[t_0-r,t_0]},\|F(x)(t_0)\|+ (t_1-t_0)^\beta\ltn F(x)\rtn_{\beta,[t_0,t_1]}\Big\}\notag\\
	&&+\ltn F(x)\rtn_{\beta,[t_0-r,t_0]} +\ltn F(x)\rtn_{\beta,[t_0,t_1]}\notag\\
	&\leq &  \| x\|_{\infty,[t_0-r,t_0]}+ \ltn x\rtn_{\beta,[t_0-r,t_0]}+ [1+(t_1-t_0)^\beta]\ltn F(x)\rtn_{\beta,[t_0,t_1]}\notag\\
	&\leq & \| x\|_{\infty,\beta,[t_0-r,t_0]} + C'\left[(t_1-t_0)^{1-\beta} + (t_1-t_0)^{\nu-\beta} \ltn \omega\rtn_{\nu,[t_0,t_1]} \right](1+\|x\|_{\infty,\beta,[t_0-r,t_1]}),\notag\\
	\end{eqnarray}
	where 
	\begin{equation}\label{eqC}
	C'= C'(t_1-t_0) := [1+(t_1-t_0)^\beta](\|g(0)\|+ L_g + L_g K (t_1-t_0)^\beta+ L').
	\end{equation}
	Furthermore, for $0<\epsilon\leq \nu-\beta$ small enough, 
	\begin{eqnarray}\label{F2}
	&&\ltn F(x)\rtn_{(\beta+\epsilon),[t_0,t_1]}\notag\\
	&&\leq  C'\left((t_1-t_0)^{1-\beta-\epsilon} + (t_1-t_0)^{\nu-\beta-\epsilon} \ltn \omega\rtn_{\nu,[t_0,t_1]} \right)(1+\|x\|_{\infty,\beta,[t_0-r,t_1]}).
	\end{eqnarray}
	
	{\bf {Step 2}}:
	Following \cite{congduchong} and \cite{duc1} , assign
	\begin{equation}\label{eqCmax}
	C :=  2(\|g(0)\|+ L' + L_g (K+1))
	\end{equation}
	and fix $\mu < \min\{1,C\}$. We construct a sequence $t_i$ in $[0,\infty)$ such that $t_0=0$ and 
	\[
	t_{i+1}= \sup \{t\geq t_i : C\left[(t-t_i)^{1-\beta} + (t-t_i)^{\nu-\beta} \ltn \omega\rtn_{\nu,[t_i,t]} \right]\leq\mu\}.
	\]
	Since $\omega \in C^{0,\nu\rm{-Hol}}([0,T],\R)$,
	\[
	\Big|\ltn \omega\rtn_{\nu,[0,\tau]} - \ltn \omega\rtn_{\nu,[0,\tau\pm h]}\Big| \leq \max \Big\{\ltn \omega\rtn_{\nu,[\tau,\tau+h]}, \ltn \omega\rtn_{\nu,[\tau-h,\tau]} \Big\} \to 0 \text{\ as\ } h \to 0^+,
	\]
	the function $\tau^{1-\beta} + \tau^{\nu-\beta}\ltn \omega\rtn_{\nu,[0,\tau]}$ is then continuous due to the continuity of each component in $\tau$. Hence 
	\begin{equation}\label{stoppingtime}
	(t_{i+1} - t_i)^{1-\beta} + (t_{i+1} - t_i)^{\nu-\beta}   \ltn \omega\rtn_{\nu,[t_i,t_{i+1}]} = \frac{\mu}{C},\ \forall i \geq 0.
	\end{equation}
	If $t_\infty:=\sup t_i < \infty$, then by choosing $k$ such that $k(\nu-\beta)\geq 1$, one has
	\begin{eqnarray*}
		n(\mu/C)^k&\leq& \sum_{i=0}^{n-1} \left[ (t_{i+1}-t_i)^{1-\beta} + (t_{i+1}-t_i)^{\nu-\beta} \ltn \omega\rtn_{\nu,[t_i,t_{i+1}]}\right]^k\\
		&\leq &  2^{k-1}\sum_{i=0}^{n-1} \left[ (t_{i+1}-t_i)^{k(1-\beta)} + (t_{i+1}-t_i)^{k(\nu-\beta)} \ltn \omega\rtn^k_{\nu,[0,t_\infty]}\right]\\
		&\leq &  2^{k-1} \left[ \sum_{i=0}^{n-1}(t_{i+1}-t_i)^{k(1-\beta)} + \sum_{i=0}^{n-1}(t_{i+1}-t_i)^{k(\nu-\beta)} \ltn \omega\rtn^k_{\nu,[0,t_\infty]}\right]\\
		&\leq &  2^{k-1} t_\infty^{k(1-\beta)} + t_\infty^{k(\nu-\beta)} \ltn \omega\rtn^k_{\nu,[0,t_\infty]}< \infty
	\end{eqnarray*}
	for all $n\in \N$, which is contradiction. Hence $\{t_i\}$ is increasing to infinity and it makes sense to define 
	\[
	N(T,\omega) := \max \{i: t_i \leq T\}.
	\]
	Moreover,
	\begin{eqnarray}\label{NT}
	N(T,\omega)\leq 2^{k-1}\left(\frac{C}{\mu}\right)^k \left( T^{k(1-\beta)} + T^{k(\nu-\beta)} \ltn \omega\rtn^k_{\nu,[0,T]}\right).
	\end{eqnarray}
	
	{\bf {Step 3}}: In this step one shows the {\em local existence} of solution on $[t_0,t_1]$ constructed as above. From definition of stopping times, $|t_1-t_0|<1$ and  $C'(t_1-t_0)\leq C$, hence it follows that
	\[
	F: C^{\beta\text{-Hol}}([t_0-r,t_1],\R^d)\rightarrow C^{\beta\text{-Hol}}([t_0-r,t_1],\R^d)
	\]
	satisfying
	\begin{equation}\label{solutionmap}
	\| F(x)\|_{\infty,\beta,[t_0-r,t_1]}\leq \| x\|_{\infty,\beta,[t_0-r,t_0]} + \mu (1+\|x\|_{\infty,\beta,[t_0-r,t_1]})
	\end{equation}
	Introducing the set 
	\[
	B:= \left\{x\in C^{\beta\text{-Hol}}([t_0-r,t_1],\R^d)|\;x_{t_0} = \eta, \|x\|_{\infty,\beta,[t_0-r,t_1]}\leq R:=\frac{ \| \eta\|_{\infty,\beta,[t_0-r,t_0]}+\mu}{1-\mu} \right\},
	\]
	then $F:B\rightarrow B$. 
	By Lemma \ref{lem4} and the definition of $F$, the following estimate
	\begin{eqnarray*}
		\|F(x)-F(y)\|_{\infty,\beta,[t_0-r,t_1]}&=&\|F(x)-F(y)\|_{\infty,[t_0,t_1]}+\ltn F(x)-F(y)\rtn_{\beta,[t_0,t_1]}\\
		&\leq& \Big[1+(t_1-t_0)^\beta\Big]\ltn F(x)-F(y)\rtn_{\beta,[t_0,t_1]}\\
		&\leq& L(t_1-t_0,R)\Big[1+(t_1-t_0)^\beta\Big]\|x-y\|_{\infty,\beta,[t_0-r,t_1]}.
	\end{eqnarray*}
	proves the continuity of $F$ on $B$.\\
	Observe that $F$ is a compact operator on $B$. Indeed, take the sequence $y^n=F(x^n)$, $x^n\in B$, by \eqref{F2}     
	$$\ltn y^n\rtn_{(\beta+\epsilon),[t_0,t_1]}\leq C \left((t_1-t_0)^{1-\beta-\epsilon} + (t_1-t_0)^{\nu-\beta-\epsilon} \ltn \omega\rtn_{\nu,[t_0,t_1]} \right)(1+R).$$
	By Proposition 5.28 of \cite{friz}, there exists a subsequence $y^{n_k}1_{[t_0,t_1]}$ which converges in $C^{\beta\text{-Hol}}([t_0,t_1],\R^d)$. Additionally, for all $k$, $y^{n_k}(t) = \eta(t), \forall t\in [t_0-r,t_0], $ hence
	\begin{eqnarray*}
		\|y^{n_k}-y^{n_{k'}}\|_{\infty,\beta,[t_0-r,t_1]}=\|y^{n_k}-y^{n_{k'}}\|_{\infty,\beta,[t_0,t_1]} \to 0 \;\; \text{as}\;\; k,k'\to \infty.
	\end{eqnarray*}
	Since $C^{\beta\text{-Hol}}([t_0-r,t_1],\R^d)$ is Banach one concludes that there is a subsequence of $y^n$ that converges in $C^{\beta\text{-Hol}}([t_0-r,t_1],\R^d)$. \\
	To sum up, $F:B\rightarrow B$ is a compact operator on the closed ball of Banach space $C^{\beta\text{-Hol}}([t_0-r,t_1],\R^d)$. By Schauder-Tychonoff fixed point  theorem (see e.g \cite[Theorem 2.A, p. 56]{Zeidler}),  there exists a function $x^*\in B$ such that $F(x^*)=x^*$, i.e $x^*$ is a local solution of \eqref{Eq2} on $[t_0-r,t_1]$. The fact that $x^* \in C^{\nu \text{-Hol}}([t_0-r,t_1],\R^d)$ is then obvious.  \\
	
	{\bf {Step 4}}: The local solution is unique.\\
	Assuming that $x $ and $y$ are solutions to \eqref{Eq2} on $[t_0-r,t_1]$ with the same initial condition $\eta$ , bounded by $M>0$. Put $z=x-y$ then $F(x)-F(y) = z$. By virtue of Lemma \ref{lem4}, for $t_0\leq s<t\leq t_1$,
	\begin{eqnarray*}
		\ltn z\rtn_{\beta,[s,t]} &\leq &L(t_1-t_0,M) \left[(t-s)^{1-\beta} +(t-s)^{\nu-\beta} \ltn \omega\rtn_{\nu,[s,t]}\right] \| z\|_{\infty,\beta,[s-r,t]}\notag\\
		&\leq &L(t_1-t_0,M) \left[(t-s)^{1-\beta} +(t-s)^{\nu-\beta} \ltn \omega\rtn_{\nu,[s,t]}\right] \left(\| z\|_{\infty,[s-r,t]}+\ltn z\rtn_{\beta,[s-r,t]}\right)\notag\\
		&\leq &L(t_1-t_0,M) \left[(t-s)^{1-\beta} +(t-s)^{\nu-\beta} \ltn \omega\rtn_{\nu,[s,t]}\right] \\
		&&\times\left(\max\{\| z\|_{\infty,[s-r,s]},\| z\|_{\infty,[s,t]} \}+\ltn z\rtn_{\beta,[s-r,s]}+\ltn z\rtn_{\beta,[s,t]}\right).\notag
	\end{eqnarray*}
	Since $| z\|_{\infty,[s,t]}\leq \|z(s)\| + (t-s)^\beta \ltn z\rtn_{\beta,[s,t]}\leq \| z\|_{\infty,[s-r,s]} + \ltn z\rtn_{\beta,[s,t]}$, it follows that
	\begin{eqnarray}\label{step4eq1}
	&&\ltn z\rtn_{\beta,[s,t]} \notag\\
	&&\leq 2L(t_1-t_0,M) \left[(t-s)^{1-\beta} +(t-s)^{\nu-\beta} \ltn \omega\rtn_{\nu,[s,t]}\right]\left( \| z\|_{\infty,\beta,[s-r,s]} + \ltn z\rtn_{\beta,[s,t]}\right). \notag\\
	\end{eqnarray}
	Construct similarly to {\bf {Step 2}} a finite sequence $\{s_i\}$ on $[t_0,t_1]$ such that $s_0=t_0$ and
	\[
	(s_{i+1} - s_i)^{1-\beta} + (s_{i+1} - s_i)^{\nu-\beta}   \ltn \omega\rtn_{\nu,[s_i,s_{i+1}]} = \frac{\mu}{2L(t_1-t_0,M)}.
	\]
	It follows from \eqref{step4eq1} that 
	\begin{eqnarray}
	\ltn z\rtn_{\beta,[s_0,s_1]} \leq \mu \left( \| z\|_{\infty,\beta,[s_0-r,s_0]} + \ltn z\rtn_{\beta,[s_0,s_1]}\right)=\mu \ltn z\rtn_{\beta,[s_0,s_1]}.
	\end{eqnarray}
	Consequently,  $\ltn z\rtn_{\beta,[s_0,s_1]}=0$. By induction, one can prove that $\ltn z\rtn_{\beta,[t_0,t_1]}=0$. Therefore, $z(u)\equiv 0,\forall u\in [t_0-r,t_1]$, i.e. $x\equiv y$ on $[t_0-r,t_1]$.\\

	{\bf {Step 5}}: By induction, there exists a unique solution of \eqref{Eq2} on each $[t_i-r,t_{i+1}]$. Finally, due to the unboundedness of $\{t_i\}$ the solution of \eqref{Eq2} can be extended to the whole $[-r,T]$ by concatenation. \\
\end{proof}


\begin{theorem}\label{thm2}
	Under the assumptions of Theorem \ref{thm1}, one has
	\begin{equation}\label{thm2eq0}
	\sup_{t \in [t_{N(t,\omega)},t_{N(t,\omega)+1}]}\|x_t\|_{\infty,\beta,[-r,0]} \leq e^{-[N(t,\omega)+1]\log(1-\mu)} \Big[\|x_{t_0}\|_{\infty,\beta,[-r,0]} + 1 \Big],
	\end{equation}			
	where $N(t,\omega)$-the number of stopping times \eqref{stoppingtime} in $(0,t]$, can be approximated by \eqref{NT}. 
\end{theorem}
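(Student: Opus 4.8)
The plan is to promote the one-step estimate \eqref{solutionmap}, already obtained in the proof of Theorem \ref{thm1}, into a recursion along the stopping-time grid $\{t_i\}$ from Step 2 and then to solve that recursion in closed form. Throughout, $x$ denotes the unique solution of \eqref{Eq2} and $N=N(t,\omega)$ is the index with $t\in[t_N,t_{N+1}]$. First I would record the elementary identity that, for every $i$,
\[
\|x_{t_i}\|_{\infty,\beta,[-r,0]} = \|x\|_{\infty,\beta,[t_i-r,t_i]},
\]
since $x_{t_i}(u)=x(t_i+u)$ ranges over the values of $x$ on $[t_i-r,t_i]$ as $u$ ranges over $[-r,0]$. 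More generally, arguing as in Lemma \ref{lem1} but without taking the supremum over the base point $t$, for any $t\in[t_i,t_{i+1}]$ both $\|x_t\|_{\infty,[-r,0]}$ and $\ltn x_t\rtn_{\beta,[-r,0]}$ are dominated by the corresponding quantities for $x$ on $[t_i-r,t_{i+1}]$, whence
\[
\sup_{t\in[t_i,t_{i+1}]}\|x_t\|_{\infty,\beta,[-r,0]} \le \|x\|_{\infty,\beta,[t_i-r,t_{i+1}]}.
\]
Writing $A_i:=\|x\|_{\infty,\beta,[t_i-r,t_i]}=\|x_{t_i}\|_{\infty,\beta,[-r,0]}$, the task reduces to bounding $\|x\|_{\infty,\beta,[t_i-r,t_{i+1}]}$ in terms of $A_0$.

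Next I would observe that, by the very construction of the $t_i$ (the defining relation \eqref{stoppingtime} together with $C'(t_{i+1}-t_i)\le C$), estimate \eqref{solutionmap} holds verbatim on each block $[t_i,t_{i+1}]$, not only on $[t_0,t_1]$. Applying it to the fixed point $x=F(x)$ and rearranging gives
\[
\|x\|_{\infty,\beta,[t_i-r,t_{i+1}]} \le \frac{A_i+\mu}{1-\mu}.
\]
Since $[t_{i+1}-r,t_{i+1}]\subset[t_i-r,t_{i+1}]$ and the norm $\|\cdot\|_{\infty,\beta}$ does not increase under restriction of the interval, this yields the scalar recursion
\[
A_{i+1}\le \frac{A_i+\mu}{1-\mu}.
\]

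Then I would solve this affine recursion explicitly. Putting $\lambda:=1/(1-\mu)>1$, the affine map $a\mapsto\lambda(a+\mu)$ has fixed point $-1$, because $\lambda\mu=\lambda-1$; hence $A_{i+1}+1\le\lambda(A_i+1)$ and, by induction, $A_i+1\le\lambda^i(A_0+1)$. Substituting back into the block estimate and again using $\lambda(1-\mu)=1$ gives
\[
\sup_{t\in[t_N,t_{N+1}]}\|x_t\|_{\infty,\beta,[-r,0]} \le \frac{A_N+\mu}{1-\mu} \le \lambda^{N+1}(A_0+1)-1 \le \lambda^{N+1}\big(A_0+1\big).
\]
Rewriting $\lambda^{N+1}=(1-\mu)^{-(N+1)}=e^{-(N+1)\log(1-\mu)}$ and recalling $A_0=\|x_{t_0}\|_{\infty,\beta,[-r,0]}$ reproduces exactly \eqref{thm2eq0}, after which $N(t,\omega)$ is estimated by \eqref{NT}.

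The algebra above is routine once the correct quantities are isolated; the only genuinely delicate point is the bookkeeping in the first paragraph, namely verifying that the segment norm $\|x_t\|_{\infty,\beta,[-r,0]}$ is controlled, uniformly in $t\in[t_i,t_{i+1}]$, by the path norm on $[t_i-r,t_{i+1}]$, and that restricting the base interval never increases the norm, so that \eqref{solutionmap} may legitimately be iterated block by block. I expect this transfer of the one-step estimate from $[t_0,t_1]$ to each later block, rather than the solution of the recursion, to be the main obstacle.
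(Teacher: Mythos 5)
Your proposal is correct and follows essentially the same route as the paper: iterate the one-step estimate \eqref{solutionmap} over the stopping-time blocks, use that the segment norm $\|x_t\|_{\infty,\beta,[-r,0]}$ equals $\|x\|_{\infty,\beta,[t-r,t]}$ and is dominated by the norm over the enclosing block, and solve the resulting affine recursion via the substitution $A_i+1$. The only cosmetic difference is that the paper states the block estimate for every $t\in[t_i,t_{i+1}]$ rather than only at the endpoint, but the two formulations are interchangeable here.
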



\begin{proof}
	From the proof of Theorem \ref{thm1}, in particular \eqref{F1} and \eqref{solutionmap}, it follows that for any $i \geq 0$
	\[
	\|x\|_{\infty,\beta,[t_i-r,t]} \leq \|x\|_{\infty,\beta,[t_i-r,t_i]} + \mu (1+ \|x\|_{\infty,\beta,[t_i-r,t]}),\ \forall t \in [t_i,t_{i+1}].
	\]
	In other words,
	\begin{equation}\label{thm2eq1}
	\|x\|_{\infty,\beta,[t_i-r,t]} \leq \frac{\mu}{1-\mu} + \frac{1}{1-\mu} \|x\|_{\infty,\beta,[t_i-r,t_i]},\ \forall t\in [t_i,t_{i+1}].
	\end{equation}
	On the other hand,
	\begin{eqnarray}\label{thm2eq2}
	\|x\|_{\infty,\beta,[t-r,t]} = \|x\|_{\infty,[t-r,t]} + \ltn x \rtn_{\beta,[t-r,t]} \leq \|x\|_{\infty,\beta,[t_i-r,t]},\ \forall t \in [t_i,t_{i+1}].
	\end{eqnarray}
	Hence it follows from \eqref{thm2eq1} and \eqref{thm2eq2} that
	\begin{equation*}
	\|x\|_{\infty,\beta,[t-r,t]} \leq \frac{\mu}{1-\mu}+ \frac{1}{1-\mu}\|x\|_{\infty,\beta,[t_i-r,t_i]},\ \forall t\in [t_i,t_{i+1}].
	\end{equation*}
	which implies that
	\begin{equation}\label{thm2eq3}
	\sup_{t \in [t_i,t_{i+1}]}\|x_t\|_{\infty,\beta,[-r,0]} \leq \frac{\mu}{1-\mu} + \frac{1}{1-\mu} \|x_{t_i}\|_{\infty,\beta,[-r,0]}. 
	\end{equation}
	In particular, for any $i\geq 0$,
	\[
	\|x_{t_{i+1}}\|_{\infty,\beta,[-r,0]} \leq \frac{\mu}{1-\mu} + \frac{1}{1-\mu} \|x_{t_i}\|_{\infty,\beta,[-r,0]},
	\]
	or equivalently
	\[
	\|x_{t_{i+1}}\|_{\infty,\beta,[-r,0]} + 1 \leq \frac{1}{1-\mu} \Big[\|x_{t_{i}}\|_{\infty,\beta,[-r,0]} + 1 \Big].
	\]
	By induction arguments, one can conclude that
	\begin{equation}\label{thm2eq4}
	\|x_{t_i}\|_{\infty,\beta,[-r,0]} \leq \Big[\frac{1}{1-\mu}\Big]^i \Big[\|x_{t_0}\|_{\infty,\beta,[-r,0]} + 1\Big] -1,\ \forall i\geq 0.
	\end{equation}
	\eqref{thm2eq0} is then a direct consequence of \eqref{thm2eq3} and \eqref{thm2eq4}. \\
\end{proof}
The arguments in the proof of Theorem \ref{thm2} help us to derive a type of Gronwall lemma for H\"older norms.


\begin{lemma}\label{lem5}{\bf [Gronwall-type lemma]}
	Assume that $z:[-r,T]\to\R^d$ satisfies for any $0\leq s \leq t \leq T$
	\begin{eqnarray}\label{basicassumption} 
	\ltn z\rtn_{\beta,[s,t]}\leq A+C\left((t-s)^{1-\beta}+(t-s)^{\nu-\beta}\ltn\omega\rtn_{\nu,[s,t]}\right)\| z\|_{\infty,\beta,[s-r,t]}
	\end{eqnarray}
	with some constants $A,C>0$. Then for $\mu<\min\{\frac{1}{2},C\}$  the following estimate holds 
	\begin{equation}\label{lem5eq1}
	\|z_t\|_{\infty,\beta,[-r,0]}\leq e^{-[N(t,\omega)+1]\log(1-2\mu)}\Big[\frac{A}{\mu} +\|z\|_{\infty,\beta,[-r,0]} \Big],\ \forall t\in [0,T]. 
	\end{equation}
\end{lemma}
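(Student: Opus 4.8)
The plan is to mimic the argument of Theorem \ref{thm2}, reusing the stopping-time sequence $\{t_i\}$ from Step 2 of the proof of Theorem \ref{thm1} (now built with the constant $C$ appearing in \eqref{basicassumption}), so that $t_0=0$ and $(t_{i+1}-t_i)^{1-\beta}+(t_{i+1}-t_i)^{\nu-\beta}\ltn\omega\rtn_{\nu,[t_i,t_{i+1}]}=\mu/C$, and $N(t,\omega)$ counts these times in $(0,t]$. First I would localize the hypothesis \eqref{basicassumption} to an interval $[t_i,t]$ with $t\in[t_i,t_{i+1}]$. Since the functional $\tau\mapsto (\tau-t_i)^{1-\beta}+(\tau-t_i)^{\nu-\beta}\ltn\omega\rtn_{\nu,[t_i,\tau]}$ is nondecreasing and equals $\mu/C$ at $\tau=t_{i+1}$, the coefficient $C\big((t-t_i)^{1-\beta}+(t-t_i)^{\nu-\beta}\ltn\omega\rtn_{\nu,[t_i,t]}\big)$ is bounded by $\mu$, whence
\[
\ltn z\rtn_{\beta,[t_i,t]}\leq A+\mu\,\|z\|_{\infty,\beta,[t_i-r,t]},\qquad t\in[t_i,t_{i+1}].
\]

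Next I would unfold the norm on the right. Using subadditivity of $\ltn\cdot\rtn_\beta$ over $[t_i-r,t_i]\cup[t_i,t]$, the bound $\|z\|_{\infty,[t_i,t]}\leq \|z(t_i)\|+(t-t_i)^\beta\ltn z\rtn_{\beta,[t_i,t]}\leq \|z\|_{\infty,[t_i-r,t_i]}+\ltn z\rtn_{\beta,[t_i,t]}$ (valid because $t_{i+1}-t_i<1$, as $(t_{i+1}-t_i)^{1-\beta}\le\mu/C<1$), and the splitting $\|z\|_{\infty,[t_i-r,t]}=\max\{\|z\|_{\infty,[t_i-r,t_i]},\|z\|_{\infty,[t_i,t]}\}$, one obtains
\[
\|z\|_{\infty,\beta,[t_i-r,t]}\leq \|z\|_{\infty,\beta,[t_i-r,t_i]}+2\,\ltn z\rtn_{\beta,[t_i,t]}.
\]
Substituting the localized inequality and collecting terms yields $(1-2\mu)\|z\|_{\infty,\beta,[t_i-r,t]}\leq \|z\|_{\infty,\beta,[t_i-r,t_i]}+2A$; here the factor $2$ is precisely why the hypothesis demands $\mu<1/2$, in contrast to the $\mu<1$ threshold of Theorem \ref{thm2}.

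The decisive algebraic step is to absorb the inhomogeneous term $A$ by shifting every quantity by $A/\mu$. A short computation shows $\tfrac{2A}{1-2\mu}+\tfrac{A}{\mu}=\tfrac{A}{\mu(1-2\mu)}$, which is exactly the identity needed to rewrite the previous display as
\[
\|z\|_{\infty,\beta,[t_i-r,t]}+\frac{A}{\mu}\leq \frac{1}{1-2\mu}\Big[\|z\|_{\infty,\beta,[t_i-r,t_i]}+\frac{A}{\mu}\Big].
\]
I expect this calibration — choosing precisely the shift $A/\mu$ so that the recursion linearizes cleanly — to be the only genuinely non-routine point; everything else is bookkeeping inherited from Theorem \ref{thm2}.

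Finally, setting $t=t_{i+1}$ gives a one-step multiplicative recursion for the sequence $\|z_{t_i}\|_{\infty,\beta,[-r,0]}+A/\mu$, which I would iterate from $t_0=0$ (where $\|z_{t_0}\|_{\infty,\beta,[-r,0]}=\|z\|_{\infty,\beta,[-r,0]}$). For general $t\in[t_i,t_{i+1}]$ with $i=N(t,\omega)$, I would invoke the identity $\|z_t\|_{\infty,\beta,[-r,0]}=\|z\|_{\infty,\beta,[t-r,t]}$ together with $\|z\|_{\infty,\beta,[t-r,t]}\leq\|z\|_{\infty,\beta,[t_i-r,t]}$ (as in \eqref{thm2eq2}) to conclude that $\|z_t\|_{\infty,\beta,[-r,0]}+A/\mu\leq (1-2\mu)^{-(N(t,\omega)+1)}\big[\|z\|_{\infty,\beta,[-r,0]}+A/\mu\big]$. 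Dropping the nonnegative term $A/\mu$ on the left and rewriting $(1-2\mu)^{-(N(t,\omega)+1)}=e^{-[N(t,\omega)+1]\log(1-2\mu)}$ then yields exactly \eqref{lem5eq1}.
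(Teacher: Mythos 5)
Your proposal is correct and follows essentially the same route as the paper's proof: localize \eqref{basicassumption} on the stopping-time intervals to get $\ltn z\rtn_{\beta,[t_i,t]}\leq A+\mu\|z\|_{\infty,\beta,[t_i-r,t]}$, absorb via the factor $2$ (hence $\mu<\tfrac12$), shift by $A/\mu$ to linearize the recursion, and iterate as in Theorem \ref{thm2}. No gaps.
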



\begin{proof} 
	Using the construction of stopping times in \eqref{stoppingtime}, one has
	\[
	\ltn z \rtn_{\beta,[t_i,t]} \leq A + \mu \|z\|_{\infty,\beta,[t_i-r,t]},\ \forall t \in [t_i,t_{i+1}], 
	\]
	hence
	\begin{eqnarray*}
		\|z\|_{\infty,\beta,[t_i-r,t]} &\leq& \max\{\|z\|_{\infty,[t_i-r,t_i]}, \| z \|_{\infty,[t_i,t]}\} + \ltn z \rtn_{\beta,[t_i-r,t_i]}+\ltn z \rtn_{\beta,[t_i,t]} \\
		&\leq&  \|z\|_{\infty,\beta,[t_i-r,t_i]} + (1+(t_{i+1}-t_i)^\beta) \ltn z\rtn_{\beta,[t_i,t]}\\
		&\leq& \|z\|_{\infty,\beta,[t_i-r,t_i]} + 2 [A + \mu \|z\|_{\infty,\beta,[t_i-r,t]}] 
	\end{eqnarray*}
	due to the fact that $\frac{\mu}{C}<1$. It follows that 
	\begin{eqnarray*}
		\|x\|_{\infty,\beta,[t_i-r,t]} \leq \frac{2A}{1-2\mu} + \frac{1}{1-2\mu}\| z \|_{\infty,\beta,[t_i-r,t_i]},\ \forall t \in [t_i,t_{i+1}],
	\end{eqnarray*}
	(provided that $\mu<\frac{1}{2}$), which has similar form to \eqref{thm2eq1}. As a consequence, by following the same arguments as in Theorem \ref{thm2}, one has
	\begin{equation*}
	\|x_{t_i}\|_{\infty,\beta,[-r,0]} \leq \Big[\frac{1}{1-2\mu}\Big]^i \Big[\|x_{t_0}\|_{\infty,\beta,[-r,0]} + \frac{A}{\mu}\Big] -\frac{A}{\mu},\ \forall i\geq 0,
	\end{equation*}
	which proves \eqref{lem5eq1}.\\
\end{proof}

Denote by $x(\cdot,\omega,\eta)$ the solution of \eqref{Eq1} with initial function $\eta$. We prove in the following the continuity of the solution with respect to the initial condition.


\begin{theorem}\label{thm3}
	Under the assumptions of Theorem \ref{thm1}, the solution $x_t(\cdot,\omega,\eta)$ is continuous with respect to $\eta$.
\end{theorem}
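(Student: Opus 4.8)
The plan is to establish a Lipschitz-type dependence on the initial datum via the Gronwall-type Lemma \ref{lem5}, applied with vanishing additive constant $A$. Fix $\eta$ and let $\bar\eta \in C^{\beta\rm{-Hol}}([-r,0],\R^d)$ range over a bounded neighbourhood of $\eta$, say $\|\bar\eta\|_{\infty,\beta,[-r,0]} \le \|\eta\|_{\infty,\beta,[-r,0]} + 1$. Write $x = x(\cdot,\omega,\eta)$ and $\bar x = x(\cdot,\omega,\bar\eta)$ for the corresponding solutions furnished by Theorem \ref{thm1}, and set $z := x - \bar x$, so that $z|_{[-r,0]} = \eta - \bar\eta$. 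First I would use the growth estimate of Theorem \ref{thm2}, together with the subadditivity $\ltn z\rtn_{\beta,[a,c]} \le \ltn z\rtn_{\beta,[a,b]} + \ltn z\rtn_{\beta,[b,c]}$ and the finiteness of $N(T,\omega)$, to produce a single constant $M$ bounding both $\|x\|_{\infty,\beta,[-r,T]}$ and $\|\bar x\|_{\infty,\beta,[-r,T]}$, uniformly over $\bar\eta$ in this neighbourhood.

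Next I would derive the differential inequality that feeds Lemma \ref{lem5}. For any $0 \le s < t \le T$, subtracting the integral equations \eqref{Eq1int} for $x$ and $\bar x$ gives
\[
z(t) - z(s) = \int_s^t \big[f(x_u) - f(\bar x_u)\big]\,du + \int_s^t \big[g(x_u) - g(\bar x_u)\big]\,d\omega(u),
\]
which is exactly the quantity estimated in the proof of Lemma \ref{lem4}. Reusing verbatim the bounds \eqref{lem4eq2} and \eqref{lem4eq3} with the uniform bound $M$ then yields
\[
\ltn z\rtn_{\beta,[s,t]} \le L(T,M)\left((t-s)^{1-\beta} + (t-s)^{\nu-\beta}\ltn\omega\rtn_{\nu,[s,t]}\right)\|z\|_{\infty,\beta,[s-r,t]},
\]
with $L(T,M)$ as in \eqref{eqL}. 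This is precisely the hypothesis \eqref{basicassumption} of Lemma \ref{lem5} with $A = 0$ and $C = L(T,M)$.

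Finally I would invoke Lemma \ref{lem5}. Choosing $\mu < \min\{\tfrac12, C\}$ and noting that the hypothesis holds with $A = 0$ (equivalently, applying the lemma with an arbitrary $A > 0$ and letting $A \downarrow 0$), the conclusion \eqref{lem5eq1} becomes
\[
\|z_t\|_{\infty,\beta,[-r,0]} \le e^{-[N(t,\omega)+1]\log(1-2\mu)}\,\|\eta - \bar\eta\|_{\infty,\beta,[-r,0]}, \qquad \forall t \in [0,T].
\]
Since $N(T,\omega) < \infty$ by Step 2 in the proof of Theorem \ref{thm1}, the prefactor is a finite constant depending only on $T$, $\omega$ and $M$, uniform in $t$; hence $\sup_{t\in[0,T]}\|x_t(\cdot,\omega,\eta) - x_t(\cdot,\omega,\bar\eta)\|_{\infty,\beta,[-r,0]} \to 0$ as $\bar\eta \to \eta$, proving continuity (in fact local Lipschitz continuity) of the solution map.

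The step I expect to be the main obstacle is the first one: turning the sliding-window growth bound of Theorem \ref{thm2} into a genuine bound on the global H\"older norm $\|x\|_{\infty,\beta,[-r,T]}$ that is uniform over the neighbourhood of $\eta$, since only such a uniform $M$ makes the constant $C = L(T,M)$ in the Gronwall inequality independent of $\bar\eta$. Once this uniform bound is secured, the remaining two steps are essentially a transcription of Lemma \ref{lem4} followed by a direct application of Lemma \ref{lem5}.
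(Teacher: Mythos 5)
Your proposal follows essentially the same route as the paper: a uniform bound $M$ on both solutions obtained from the growth estimate of Theorem \ref{thm2}, the difference inequality from Lemma \ref{lem4} giving hypothesis \eqref{basicassumption} with $A=0$ and $C=L(T,M)$, and then the Gronwall-type Lemma \ref{lem5} yielding the local Lipschitz bound $\|x_t(\cdot,\omega,\eta)-x_t(\cdot,\omega,\bar\eta)\|_{\infty,\beta,[-r,0]}\leq e^{-[N(t,\omega)+1]\log(1-2\mu)}\|\eta-\bar\eta\|_{\infty,\beta,[-r,0]}$. The step you flag as the main obstacle (passing from the sliding-window bound to a uniform global H\"older bound) is the same step the paper handles, essentially via the observation recorded in the remark following Theorem \ref{thm3}, so your argument is correct and matches the paper's.
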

\begin{proof}
	For any $\eta^1, \eta^2 \in C^{\beta\text{-Hol}}([-r,0],\R^d)$ denote $x^i(\cdot)=x(\cdot,\omega,\eta^i)$, $i=1,2$. Fix $\eta^1$, by \eqref{thm2eq0} one can choose $M$ large enough such that $\|x(\cdot,\omega,\eta^2)\|_{\infty,\beta,[-r,T]}\leq M$ for all $\eta^2$ such that $\|\eta^2 -\eta^1\|_{\infty,\beta,[-r,0]}\leq 1$. From \eqref{lem4eq1} in Lemma \ref{lem4}, one has for all $0\leq a\leq b\leq T$,
	\begin{eqnarray*}
		\ltn x^1 - x^2\rtn_{\beta,[a,b]} \leq  L(T,M)\left((b-a)^{1-\beta}+ (b-a)^{\nu-\beta}\ltn\omega\rtn_{\nu,[a,b]}\right) \|x^1-x^2\|_{\infty,\beta,[a-r,b]},
	\end{eqnarray*}
	which has the form \eqref{basicassumption} with $A=0$ and $C= L(T,M)$.
	Therefore,
	\[
	\|x_t(\cdot,\omega,\eta^2) - x_t(\cdot,\omega,\eta^1)\|_{\infty,\beta,[-r,0]}\leq e^{-[N(t,\omega)+1]\log(1-2\mu)} \|\eta^1 - \eta^2\|_{\infty,\beta,[-r,0]}, \forall t\in [0,T],
	\]
	in which $N(t,\omega)$ is defined in \eqref{NT} with $C= L(T,M)$ and $\mu < \min\{1/2, L(T,M)\}$
	and $N$ depends on $L(T,M)$ - the local constant in the vicinity of $\eta$. That proves the continuity of $x_t(\cdot,\omega,\eta)$ w.r.t. the initial function $\eta$.
\end{proof}

\begin{remark}
	It can be seen that  for $x\in C^{\beta\rm{-Hol}}([-r,T],\R^d)$ there exists $C(T,r)$ such that
	$$\|x(\cdot)\|_{\infty,\beta,[-r,T]} \leq C(T,r)\sup_{t\in[0,T]}\|x_t(\cdot)\|_{\infty,\beta,[-r,0]}.$$
	Indeed, since $\|x(\cdot)\|_{\infty,[-r,T]} =\sup_{t\in[0,T]}\|x_t(\cdot)\|_{\infty,[-r,0]}$, for $s,t \in[-r,T]$ one can construct a finite sequence $s_i$ as follow: $s=s_0, s_1=s_0+r, s_2=s_1+r,\dots $, until $s_n+r\geq t$ and assign $s_{n+1}:=t$. Then
		\begin{eqnarray*}
			\frac{\|x(t)-x(s)\|}{|t-s|^\beta} &\leq & \sum_{i=0}^{n}\frac{\|x(s_{i+1})-x({s_i})\|}{|s_{i+1}-s_i|^\beta}\\
			&\leq &  \sum_{i=0}^{n}\frac{\|x_{s_{i+1}}(s_i-s_{i+1})-x_{s_{i+1}}(0)\|}{|s_{i+1}-s_i|^\beta}\\
			&\leq &  \sum_{i=0}^{n}\ltn x_{s_{i+1}}\rtn_{\beta,[-r,0]}\\
			&\leq & (1+T/r)\sup_{t\in[0,T]}\ltn x_{t}\rtn_{\beta,[-r,0]}.
		\end{eqnarray*}
	Hence, from Theorem \ref{thm3} one concludes that 
	\[
	\|x(\cdot,\omega,\eta^2) - x(\cdot,\omega,\eta^1)\|_{\infty,\beta,[-r,T]}\leq C(T,r)e^{-[N(T,\omega)+1]\log(1-2\mu)} \|\eta^1 - \eta^2\|_{\infty,\beta,[-r,0]}.
	\]

\end{remark}

Next, assuming that $f$ is $C^1$, we fix a solution $x(\cdot,\omega,\eta)$ of \eqref{Eq1} and consider the linearized equation
\begin{eqnarray}\label{linear.equa}
y(t)=\eta^1(0)-\eta(0)+\int_0^t Df(x_s)y_sds+\int_0^tDg(x_s)y_sd\omega(s),
\end{eqnarray}
with initial function $\eta^1-\eta\in C^{\beta\rm{-Hol}}([-r,0],\R^d)$. Since $y\in C^{\beta\text{-Hol}}([-r,T],\R^d)$ and
\begin{eqnarray}\label{Dg}
\|Dg(x_t)-Dg(x_s)\|&\leq & L_M \|x_t-x_s\|^\delta_{\infty,[-r,0]}\notag\\
&\leq& L_M  \ltn x\rtn_{\beta,[-r,T]}^\delta (t-s)^{\delta\beta}\notag\\
\ltn Dg(x_.)\rtn_{\delta\beta,[a,b]} &\leq & L_MM^\delta
\end{eqnarray}
with $M\geq\| x\|_{\infty,\beta,[-r,T]}$, the integrals $\int_0^t Df(x_s)y_sds $ and $\int_0^tDg(x_s)y_sd\omega(s)$ are well defined. We need to prove the following lemma
\begin{lemma}
	The equation \eqref{linear.equa} has unique solution  $y$ in $C^{\delta\beta\rm{-Hol}}([-r,T],\R^d)$. Moreover, the solution is H\"older continuous with exponent $\nu$ on $[0,T]$.
\end{lemma}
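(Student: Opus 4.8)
The plan is to adapt the fixed-point scheme of Theorem~\ref{thm1} to the linear equation \eqref{linear.equa}, exploiting that its coefficients are now \emph{fixed} functions of time along the given solution $x=x(\cdot,\omega,\eta)$. Set $A(s):=Df(x_s)$ and $B(s):=Dg(x_s)$, viewed as elements of $L(C_r,\R^d)$. Since $f$ is $C^1$ and globally Lipschitz, $\|A(s)\|\le L_f$; by $({\textbf H}_g)$, $\|B(s)\|\le L_g$; and by \eqref{Dg}, with $M:=\|x\|_{\infty,\beta,[-r,T]}$, the path $B(\cdot)$ is $\delta\beta$-Hölder with $\ltn B\rtn_{\delta\beta,[a,b]}\le L_M M^{\delta}$. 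For $y\in C^{\delta\beta\text{-Hol}}$ the segment path $y_\cdot$ is $\delta\beta$-Hölder (Lemma~\ref{lem1}), hence $B(\cdot)y_\cdot$ is $\delta\beta$-Hölder as a product of bounded $\delta\beta$-Hölder factors. Because $\delta\beta+\nu>1$ (this is precisely the role of the hypothesis $\delta>\frac{1-\nu}{\nu}$), the Young integral $\int Dg(x_s)y_s\,d\omega(s)$ is well defined, which is why $C^{\delta\beta\text{-Hol}}$ --- and not $C^{\beta\text{-Hol}}$ --- is the natural state space. On an interval $[t_0,t_1]$ I would then define, in exact analogy with $F$,
\[
\tilde F(y)(t)=
\begin{cases}
 y(t_0)+\int_{t_0}^t Df(x_s)y_s\,ds+\int_{t_0}^t Dg(x_s)y_s\,d\omega(s), & t\in[t_0,t_1],\\[2pt]
 (\eta^1-\eta)(t), & t\in[t_0-r,t_0],
\end{cases}
\]
noting that $\eta^1-\eta\in C^{\beta\text{-Hol}}\subset C^{\delta\beta\text{-Hol}}$ since $\delta<1$.

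Next I would prove the linear analogue of Lemma~\ref{lem4}: for $y,\bar y\in C^{\delta\beta\text{-Hol}}([a-r,b],\R^d)$ and $z:=y-\bar y$,
\[
\ltn \tilde F(y)-\tilde F(\bar y)\rtn_{\delta\beta,[a,b]}\le \tilde L\big((b-a)^{1-\delta\beta}+(b-a)^{\nu-\delta\beta}\ltn\omega\rtn_{\nu,[a,b]}\big)\|z\|_{\infty,\delta\beta,[a-r,b]}.
\]
The drift part is estimated as in \eqref{lem4eq2} using $\|A(s)\|\le L_f$; for the Young part I would apply the Young--Loève estimate to the integrand $h(\cdot):=B(\cdot)z_\cdot$, bounding $\|h(s)\|\le L_g\|z\|_{\infty}$ and $\ltn h\rtn_{\delta\beta}\le L_g\ltn z\rtn_{\delta\beta,[a-r,b]}+L_M M^{\delta}\|z\|_{\infty}$ by splitting $B(t)z_t-B(s)z_s=B(t)(z_t-z_s)+(B(t)-B(s))z_s$, exactly as in Lemma~\ref{lem3}. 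The decisive point is that, unlike in Theorem~\ref{thm1}, the constant $\tilde L$ here is \emph{global}: it depends only on $L_f,L_g,L_M$ and the fixed bound $M$, because the equation is linear in $y$. Consequently, running the stopping-time construction of Step~2 with $\delta\beta$ in place of $\beta$ and $\tilde L$ in place of $C$, i.e. choosing $t_{i+1}$ so that $(t_{i+1}-t_i)^{1-\delta\beta}+(t_{i+1}-t_i)^{\nu-\delta\beta}\ltn\omega\rtn_{\nu,[t_i,t_{i+1}]}=\mu/\tilde L$ with $\mu<1$, makes $\tilde F$ a genuine contraction on the closed affine subspace $\{y\in C^{\delta\beta\text{-Hol}}([t_i-r,t_{i+1}],\R^d): y_{t_i}\ \text{prescribed}\}$. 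The Banach fixed point theorem then yields a unique local solution on each $[t_i,t_{i+1}]$, delivering existence and uniqueness simultaneously, so no Schauder argument is needed. As in Step~2 the sequence $\{t_i\}$ diverges, hence finitely many stopping times lie in $[0,T]$ and the local solutions concatenate into a unique $y\in C^{\delta\beta\text{-Hol}}([-r,T],\R^d)$; global uniqueness propagates by the induction of Step~5.

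Finally, the improved regularity on $[0,T]$ follows once $y$ is known to exist: for $0\le s<t\le T$ we have $y(t)-y(s)=\int_s^t Df(x_u)y_u\,du+\int_s^t Dg(x_u)y_u\,d\omega(u)$, where the drift term is bounded by $L_f\|y\|_{\infty}(t-s)\le L_f\|y\|_{\infty}T^{1-\nu}(t-s)^{\nu}$, while the Young term is $O((t-s)^{\nu})$ by the Young--Loève estimate, its integrand $Dg(x_\cdot)y_\cdot$ being bounded and $\delta\beta$-Hölder with $\delta\beta+\nu>1$; hence $\|y(t)-y(s)\|=O((t-s)^{\nu})$ and $y\in C^{\nu\text{-Hol}}([0,T],\R^d)$, the last assertion. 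I expect the only real obstacle to be the bookkeeping around the two distinct Hölder exponents $\beta$ (for $x$ and for the initial datum) and $\delta\beta$ (for $y$, for $Dg(x_\cdot)$, and for the Young integrand), together with the repeated verification that $\delta\beta+\nu>1$; the linearity of \eqref{linear.equa} removes the difficulty that forced the use of Schauder in Theorem~\ref{thm1}, since it renders the contraction constant independent of the size of the solution.
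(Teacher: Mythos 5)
Your proposal is correct and follows essentially the same route as the paper: define the analogous solution map on $[t_0,t_1]$, derive the $\delta\beta$-H\"older estimate with a constant that is \emph{global} thanks to \eqref{Dg} and the boundedness of $Df$ and $Dg$, and then rerun the stopping-time machinery of Theorem \ref{thm1}, finishing with the Young--Lo\`eve estimate for the $\nu$-H\"older regularity. The only (minor and legitimate) difference is that you get existence and uniqueness in one stroke from the Banach fixed point theorem, whereas the paper invokes the arguments of Theorem \ref{thm1} for existence and then separately notes that linearity yields uniqueness by contraction --- your streamlining simply carries the paper's own contraction observation one step further.
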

\begin{proof}
	The proof is similar to that of Theorem \ref{thm1}. Note that $\|Df(\xi)\|\leq L_f$ for all $\xi\in C_r$ \\
	Define the map
	\begin{eqnarray*}
		G(y)(t)=
		\begin{cases}
			&  y(t_0)+ \int_0^t Df(x_s)y_sds+\int_0^tDg(x_s)y_sd\omega(s),\quad \text{if}\;\; t\in [t_0,t_1]\\
			& y(t), \qquad \qquad \qquad \qquad \qquad \qquad \qquad \qquad \quad\;\;\; \text{if}\;\; t\in [t_0-r,t_0],
		\end{cases}
	\end{eqnarray*}	
	then for $s,t\in [0,T]$
	\begin{eqnarray*}
		&&\|G(y)(t)-G(y)(s)\|\\
		&\leq & L_f \|y\|_{\infty,[s-r,t]}(t-s)+ \ltn\omega\rtn_{\nu,[s,t]}(t-s)^\nu \Big[L_g \|y\|_{\infty,[s-r,t]}\\
		&&+ K' (t-s)^{\delta\beta}\|y\|_{\infty,[s-r,t]}\ltn Dg(x_.)\rtn_{\delta\beta,[s,t]}+K'L_g (t-s)^{\delta\beta} \ltn y\rtn_{\delta\beta,[s-r,t]}\Big],
	\end{eqnarray*}
	with $K'=\frac{1}{1-2^{1-(\nu+\delta\beta)}}$. Combining with \eqref{Dg}, it follows that
	\begin{eqnarray*}
		\ltn G(y)\rtn_{\delta\beta,[s,t]}&\leq & C \left((t-s)^{1-\delta\beta} + \ltn\omega\rtn_{\nu,[s,t]}(t-s)^{\nu-\delta\beta} \right) \| y\|_{\delta\beta,[s-r,t]}
	\end{eqnarray*}
	Repeat the arguments in Theorem \ref{thm1}, one can prove the existence of solution to \eqref{Dg}. Since $G$ is linear, the uniqueness of the solution is derived by a contraction mapping argument. Finally, it is obvious that the solution depends linearly on the initial function.
\end{proof}


\begin{theorem}\label{thm4}
	Assuming that $f,g$ satisfy conditions $(H_f)$ and $(H_g)$ and $f$ is a $C^1$ function. Then the solution $x_t(.,\omega,\eta)$ of \eqref{Eq2} is differentiable with respect to initial function $\eta$.
\end{theorem}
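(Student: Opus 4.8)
The plan is to identify the Fréchet derivative of the map $\eta\mapsto x_t(\cdot,\omega,\eta)$ with the solution operator of the linearized equation \eqref{linear.equa}. By the preceding lemma, for every $v\in C^{\beta\rm{-Hol}}([-r,0],\R^d)$ the equation \eqref{linear.equa} with initial segment $v$ has a unique solution $y^v\in C^{\delta\beta\rm{-Hol}}([-r,T],\R^d)$, and $v\mapsto y^v$ is linear; its linearity together with a Gronwall bound as in Lemma \ref{lem5} (with $A=0$) shows that this operator is bounded, hence a legitimate candidate for $D_\eta x_t$. Fixing $\eta$ and abbreviating $x=x(\cdot,\omega,\eta)$, $x^1=x(\cdot,\omega,\eta^1)$, $w=x^1-x$, and letting $y$ solve \eqref{linear.equa} with initial segment $\eta^1-\eta$, it remains to prove that $\|x^1_t-x_t-y_t\|_{\infty,\kappa,[-r,0]}=o(\|\eta^1-\eta\|_{\infty,\beta,[-r,0]})$ uniformly in $t\in[0,T]$ as $\eta^1\to\eta$, for a suitable Hölder exponent $\kappa$. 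Throughout I keep $\eta^1$ in the unit ball around $\eta$, so that by Theorem \ref{thm3} both solutions stay in a fixed ball of radius $M$ and $\|w\|_{\infty,\beta,[-r,T]}\leq \Lambda\|\eta^1-\eta\|_{\infty,\beta,[-r,0]}$ for some $\Lambda=\Lambda(T,M)$.

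First I would set $z=x^1-x-y$, note $z_0=0$, and subtract the integral equations for $x^1$, $x$ and \eqref{linear.equa}. A first order Taylor expansion with integral remainder, $h(x^1_s)-h(x_s)=\int_0^1 Dh(x_s+\theta w_s)\,d\theta\,w_s$ for $h\in\{f,g\}$, splits each increment into its linearization $Dh(x_s)$ plus a remainder, yielding
\[
z(t)=\int_0^t Df(x_s)z_s\,ds+\int_0^t Dg(x_s)z_s\,d\omega(s)+R_f(t)+R_g(t),
\]
where $R_f(t)=\int_0^t\big[\int_0^1(Df(x_s+\theta w_s)-Df(x_s))d\theta\big]w_s\,ds$ and $R_g$ is the analogous Young integral with $Dg$ and $d\omega$. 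Thus $z$ solves the linearized equation forced by $R_f+R_g$, which is exactly the structure required by the Gronwall-type Lemma \ref{lem5}.

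The decisive step is to show the two remainders are of higher order. Since $f$ is $C^1$, $Df$ is uniformly continuous on the ball of radius $M$, so the inner bracket of $R_f$ is bounded in sup norm by a modulus $\rho(\|w\|_{\infty})$ with $\rho(0^+)=0$; as $R_f$ is an ordinary integral, the bound \eqref{lem4eq2} controls its Hölder seminorm by a factor of the interval length times this sup bound, giving $\ltn R_f\rtn_{\kappa,[s,t]}=o(\|\eta^1-\eta\|)$. The term $R_g$ is harder. The local $\delta$-Hölder continuity of $Dg$ in $({\textbf H}_g)$ bounds the bracket by $\tfrac{L_M}{1+\delta}\|w_s\|^\delta_{\infty,[-r,0]}$, so the integrand is pointwise of order $\|w\|^{1+\delta}$; the subtle object is the integrand's Hölder seminorm demanded by the Young--Loève estimate, which involves the \emph{second difference} $Dg(x_t+\theta w_t)-Dg(x_t)-Dg(x_s+\theta w_s)+Dg(x_s)$. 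This admits two bounds from $({\textbf H}_g)$ — one of order $\|w\|^\delta$ with no time gain, one of order $(t-s)^{\delta\beta}$ with no smallness in $w$ — and interpolating between them gives a bound of order $\|w\|^{\delta\lambda}(t-s)^{\delta\beta(1-\lambda)}$ for $\lambda\in[0,1]$. Choosing $\kappa\in(1-\nu,\delta\beta)$ and then $\lambda>0$ small enough that $\delta\beta(1-\lambda)>\kappa$, the $\kappa$-Hölder seminorm of the integrand becomes $O(\|\eta^1-\eta\|^{1+\delta\lambda})$, whence $\ltn R_g\rtn_{\kappa,[s,t]}=o(\|\eta^1-\eta\|)$.

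Finally I would assemble these into the hypothesis of Lemma \ref{lem5}. The interval $(1-\nu,\delta\beta)$ is nonempty precisely because $\delta\beta+\nu>1$, i.e.\ because $\delta>\tfrac{1-\nu}{\nu}$ and $\beta$ was chosen with $\delta\beta+\nu>1$; with $\kappa$ in this range one has $\kappa<\delta\beta<\nu$. Combining the linear estimate from the linearized lemma with the remainder bounds gives
\[
\ltn z\rtn_{\kappa,[s,t]}\leq A+C\big((t-s)^{1-\kappa}+(t-s)^{\nu-\kappa}\ltn\omega\rtn_{\nu,[s,t]}\big)\|z\|_{\infty,\kappa,[s-r,t]},
\]
with $A=\ltn R_f+R_g\rtn_\kappa=o(\|\eta^1-\eta\|)$ and $C=C(L_f,L_g,L_M,M)$. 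This is \eqref{basicassumption} with exponent $\kappa$ in place of $\beta$, and the argument of Lemma \ref{lem5} applies verbatim since $\nu>\kappa$ and $\nu+\kappa>1$. As $z_0=0$, it returns $\|z_t\|_{\infty,\kappa,[-r,0]}\leq e^{-[N(t,\omega)+1]\log(1-2\mu)}A/\mu=o(\|\eta^1-\eta\|)$ uniformly on $[0,T]$, which is the asserted differentiability, with derivative $v\mapsto y^v$. I expect the whole difficulty to sit in the $R_g$ estimate: with only a $\delta$-Hölder (rather than Lipschitz or $C^1$) derivative $Dg$, the second difference above cannot be made higher order directly, and it is the interpolation together with the strict slack $\delta\beta>1-\nu$ that rescues the argument — at the mild cost of reading the remainder in the slightly weaker Hölder exponent $\kappa$.
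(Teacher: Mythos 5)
Your proposal is correct and follows essentially the same route as the paper: the decomposition $z=x^1-x-y$, the Taylor remainders for $f$ and $g$, the interpolation between the $\|w\|^{\delta}$-bound and the $(t-s)^{\delta\beta}$-bound on the second difference of $Dg$ (your $\lambda$ is the paper's $1-\gamma$), and the Gronwall-type Lemma \ref{lem5} with $A=o(\|\eta^1-\eta\|_{\infty,\beta,[-r,0]})$. Your explicit tracking of the weakened H\"older exponent $\kappa=\gamma\delta\beta$ in the final norm is, if anything, slightly more careful than the paper's statement of \eqref{thm4eq2}.
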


\begin{proof}
	Consider two solutions $x(\cdot)=x(\cdot,\omega,\eta)$ and $x^1(\cdot)=x(\cdot,\omega,\eta^1)$ of  \eqref{Eq2}
	\begin{eqnarray*}
		x^1(t) &=& \eta^1(0)+\int_0^t f(x_s^1)ds +\int_0^tg(x^1_s)d\omega(s)\\
		x(t) &=& \eta(0)+\int_0^t f(x_s)ds +\int_0^tg(x_s)d\omega(s),
	\end{eqnarray*}
	and the solution $y(\cdot)=y(\cdot,\omega,\eta^1-\eta)$ of \eqref{linear.equa}. Define 
	\[
	z(\cdot)=x^1(\cdot)-x(\cdot)- y(\cdot)
	\]
	then $z\equiv 0$ on $[-r,0]$. By the assumptions, there exists $F^*,G^*$- the nonlinear remaining terms of $f,g$ such that 
	\begin{eqnarray*}
		f(x_s^1)-f(x_s) &=& Df(x_s)(x_s^1-x_s) + F^*(x^1_s-x_s) \\
		g(x_s^1)-g(x_s) &=& Dg(x_s)(x_s^1-x_s) + G^*(x^1_s-x_s).
	\end{eqnarray*}
	Since $f, g$ are $C^1$, there exist a number $h>0$ and continuous functions $p,\; q: [0,h] \to \R_+, p(0)=q(0)=0$ and $\lim \limits_{u \to 0} p(u) =\lim \limits_{u \to 0} q(u) =0$, such that 
	\begin{eqnarray*}
		\|F^*(x^1_s-x_s)\| &=& \Big\|\int_0^1 [Df(\theta x^1_s +(1-\theta) x_s) - Df(x_s)](x^1_s-x_s)d\theta \Big\| \\
		&\leq& \|x^1(\cdot)-x(\cdot)\|_{\infty,\beta,[-r,T]}\  p\big(\|x^1(\cdot)-x(\cdot)\|_{\infty,\beta,[-r,T]}\big)
	\end{eqnarray*}
	and
	\begin{eqnarray*}
		\|G^*(x^1_s-x_s)\| &=& \Big\|\int_0^1 [Dg(\theta x^1_s +(1-\theta) x_s) - Dg(x_s)](x^1_s-x_s)d\theta \Big\| \\
		&\leq& \|x^1(\cdot)-x(\cdot)\|_{\infty,\beta,[-r,T]} \ q\big(\|x^1(\cdot)-x(\cdot)\|_{\infty,\beta,[-r,T]}\big).
	\end{eqnarray*}
	whenever $\|x^1(\cdot)-x(\cdot)\|_{\infty,\beta,[-r,T]} \leq h$. 
	Similar to Lemma \ref{lem3}, we estimate the H\"older norm of $G^*$. Specifically, for $0\leq s<t\leq T$, 
	\begin{eqnarray}\label{Gstar}
	&&\|G^*(x^1_t-x_t)-G^*(x^1_s-x_s)\|\notag\\
	&&=\left\| \int_0^1[Dg(\theta x^1_t+(1-\theta)x_t) -Dg(x_t)](x^1_t-x_t)d\theta \right.\notag\\
	&&\qquad \left.- \int_0^1[Dg(\theta x^1_s+(1-\theta)x_s) -Dg(x_s)](x^1_s-x_s)d\theta \right\|   \notag \\
	&&\leq  \left\| \int_0^1[Dg(\theta x^1_t+(1-\theta)x_t) -Dg(x_t)- Dg(\theta x^1_s+(1-\theta)x_s) \right.\notag\\
	&&\qquad +Dg(x_s)](x^1_t-x_t)d\theta \Big\|\notag \\
	&& +\left\| \int_0^1[Dg(\theta x^1_s+(1-\theta)x_s) -Dg(x_s)](x^1_t-x_t - x^1_s+x_s)d\theta \right\|.
	\end{eqnarray}
	From the assumption of $g$ the second integral in  \eqref{Gstar}  is less than or equal 
	$$
	L_M\|x^1_t-x_t - x^1_s+x_s\| .\|x^1_s-x_s\|^\delta_{\infty,[-r,0]},
	$$
	where $M$ is a upper bound of $\|x\|_{\infty,\beta,[-r,T]}$ and $\|x_1\|_{\infty,\beta,[-r,T]}$. 
	It follows from Lemma \ref{lem1} that
	\begin{eqnarray}\label{Gstar2}
	&&\left\| \int_0^1[Dg(\theta x^1_s+(1-\theta)x_s) -Dg(x_s)](x^1_t-x_t - x^1_s+x_s)d\theta \right\| \notag\\
	&&\leq   L_M(t-s)^\beta\|x^1-x\|^{1+\delta}_{\infty,\beta,[-r,T]}.
	\end{eqnarray}
	Since $\delta\beta+\nu>1$, one can choose $0<\gamma<1$ such that $\gamma\delta\beta+\nu>1$. Then
	\begin{eqnarray}
	&&\|Dg(\theta x^1_t+(1-\theta)x_t) -Dg(x_t)- Dg(\theta x^1_s+(1-\theta)x_s) +Dg(x_s)\|_{L(C_r,\R^d)}^\gamma   \notag    \\
	&&\leq  \|Dg(\theta x^1_t+(1-\theta)x_t) - Dg(\theta x^1_s-(1-\theta)x_s)\|_{L(C_r,\R^d)}^\gamma\notag\\
	&&\qquad +\|Dg(x_t) -Dg(x_s)\|_{L(C_r,\R^d)}^\gamma\notag\\
	&&\leq  2L_M^\gamma \left(\|x^1_t-x^1_s\|^{\gamma\delta}_{\infty,[-r,0]}+ |x_t-x_s\|^{\gamma\delta}_{\infty,[-r,0]}\right)\notag\\
	&&\leq  2L_M^\gamma (t-s)^{\gamma\delta\beta}\left(\|x^1_.\|^{\gamma\delta}_{\beta,[0,T]}+ \|x_.\|^{\gamma\delta}_{\beta,[0,T]}\right)\notag\\
	&&\leq  2L_M^\gamma (t-s)^{\gamma\delta\beta}\left(\|x^1\|^{\gamma\delta}_{\infty,\beta,[-r,T]}+ \|x\|^{\gamma\delta}_{\infty,\beta,[-r,T]}\right)\notag\\
	&&\leq  4M^{\gamma\delta}L_M^\gamma (t-s)^{\gamma\delta\beta}.
	\end{eqnarray} 
	On the other hand,
	\begin{eqnarray}
	&&\|Dg(\theta x^1_t+(1-\theta)x_t) -Dg(x_t)- Dg(\theta x^1_s+(1-\theta)x_s) +Dg(x_s)\|^{1-\gamma}_{L(C_r,\R^d)}\notag     \\
	&&\leq  \|Dg(\theta x^1_t+(1-\theta)x_t) - Dg(x_t)\|_{L(C_r,\R^d)}^{1-\gamma}\notag\\
	&&\quad +\|Dg(x^1_s+(1-\theta)x_s) -Dg(x_s)\|_{L(C_r,\R^d)}^{1-\gamma}\notag	\\
	&& \leq 2L_M^{1-\gamma}\|x^1-x\|^{(1-\gamma)\delta}_{\infty,\beta,[-r,T]}.
	\end{eqnarray}
	Therefore, the first integral in \eqref{Gstar} does not exceed   $8M^{\gamma\delta}L_M(t-s)^{\gamma\delta\beta}\|x^1-x\|^{1+(1-\gamma)\delta}_{\infty,\beta,[-r,T]}$. Combining this with \eqref{Gstar} and \eqref{Gstar2}, one obtains
	\begin{eqnarray*}
		\|G^*(x^1_t-x_t)-G^*(x^1_s-x_s)\|\leq   (t-s)^{\gamma\delta\beta}C(T,M)  \|  x^1-x\|^{1+(1-\gamma)\delta}_{\infty,\beta,[-r,T]},
	\end{eqnarray*}
	which implies 
	\begin{eqnarray}\label{CG*}
	\ltn G^*(x^1_.-x_.)\rtn_{\gamma\delta\beta,[0,T]} &\leq& C(G^*)\|x^1-x\|^{1+(1-\gamma)\delta}_{\infty,\beta,[-r,T]},
	\end{eqnarray}
Rewrite the equation of $z$ in the form
	\begin{eqnarray*}
		z(t)&=& \int_0^t\Big(f(x_s^1)-f(x_s)-Df(x_s)y_s\Big)ds+ \int_0^t\Big(g(x^1_s)-g(x_s)-Dg(x_s)y_s\Big)d\omega(s)\\
		&=& \int_0^t\Big(Df(x_s)z_s +F^*(x_s^1-x_s)\Big)ds + \int_0^t\Big(Dg(x_s)z_s +G^*(x_s^1-x_s)\Big)d\omega(s)\\
		&=& \Big[\int_0^t F^*(x_s^1-x_s) ds + \int_0^t G^*(x^1_s-x_s)d\omega(s)\Big]+\int_0^t Df(x_s)z_s ds + \int_0^t Dg(x_s)z_s d\omega(s).
	\end{eqnarray*}
	By similar estimates as in Theorem \ref{thm1}, there exist constants $K_1,K_2$ depending on $\nu,\beta,\delta,\gamma$ and generic constants $C_1,C_2$ such that for all $0\leq s<t\leq T$ 
	\begin{eqnarray}\label{thm4eq1}
	\ltn z\rtn_{\beta,[s,t]} &\leq& \Big(|t-s|^{1-\beta}  \|F^*(x_\cdot^1-x_\cdot)\|_{\infty,[s,t]} \notag\\
	&& + |t-s|^{\nu-\beta} K(1+T^{\gamma\delta\beta}) \ltn \omega \rtn_{\nu,[s,t]} \|G^*(x^1_\cdot-x_\cdot)\|_{\infty,\gamma\delta\beta,[s,t]} \Big) \notag\\
	&&+ \Big( \|Df(x_\cdot)z_\cdot \|_{\infty,[s,t]} |t-s|^{1-\beta} \notag\\
	&&+ |t-s|^{\nu-\beta} K(1+T^{\delta\beta}) \ltn \omega \rtn_{\nu,[s,t]} \|Dg(x_\cdot)z_\cdot\|_{\infty,\delta\beta,[s,t]}\Big)\notag\\
	&\leq& C_1\Big(\|F^*(x^1-x)\|_{\infty,[-r,T]} + \|G^*(x^1-x)\|_{\infty,\gamma\delta\beta,[-r,T]}\Big)\notag\\
	&&+C_2\left((t-s)^{1-\beta}+(t-s)^{\nu-\beta}\ltn\omega\rtn_{\nu,[s,t]}\right)\|z\|_{\infty,\beta,[s-r,t]}\notag\\
	&\leq& C_1\Big(\|x^1-x\|_{\infty,\beta,[-r,T]}\  P(\|x^1-x\|_{\infty,\beta,[-r,T]})\Big)\notag\\
	&&+C_2\left((t-s)^{1-\beta}+(t-s)^{\nu-\beta}\ltn\omega\rtn_{\nu,[s,t]}\right)\|z\|_{\infty,\beta,[s-r,t]}
	\end{eqnarray}
	where $P(u) = p(u)+q(u) + u^{(1-\gamma)\delta}$. Due to Theorem \ref{thm3}, there exist constants $A(T,\eta), C(T,\eta)$, a number $h_1>0$  and a function $p_1: [0,h_1] \to \R_+$ with $p_1(0)=0$, $\lim \limits_{u \to 0} p_1(u) =0$, such that
	\begin{eqnarray*}
		\ltn z\rtn_{\beta,[s,t]} &\leq&  A(T,\eta)\|\eta^1 - \eta\|_{\infty,\beta,[-r,0]}  p_1\big(\|\eta^1 - \eta\|_{\infty,\beta,[-r,0]}\big) \\
		&&+ C(T,\eta) \left((t-s)^{1-\beta}+(t-s)^{\nu-\beta}\ltn\omega\rtn_{\nu,[s,t]}\right)\|z\|_{\infty,\beta,[s-r,t]},
	\end{eqnarray*}
	whenever $\|\eta^1 - \eta\|_{\infty,\beta,[-r,0]} \leq h_1$.
	Applying Lemma \ref{lem5}, one concludes that there exists a generic constant $C$ such that
	\begin{eqnarray*}
		\|z_.\|_{\infty,\beta,[0,T]} &\leq& C\Big[\|\eta^1 - \eta\|_{\infty,\beta,[-r,0]} p_1\big(\|\eta^1 - \eta\|_{\infty,\beta,[-r,0]} + \|z\|_{\infty,\beta,[-r,0]}\Big]\notag\\
		&\leq& C\|\eta^1 - \eta\|_{\infty,\beta,[-r,0]} p_1\big(\|\eta^1 - \eta\|_{\infty,\beta,[-r,0]}\big) 
	\end{eqnarray*}
	for all $\|\eta^1 - \eta\|_{\infty,\beta,[-r,0]} \leq h_1$, since $z \equiv 0$ on $[-r,0]$. Therefore, 
	\begin{eqnarray}\label{thm4eq2}
	&&\|x_t(\cdot,\omega,\eta^1)-x_t(\cdot,\omega,\eta)-y_t(\cdot,\omega,\eta^1-\eta)\|_{\infty,\beta,[-r,0]} \notag\\
	&&\leq \|\eta^1 - \eta\|_{\infty,\beta,[-r,0]} C p_1\big(\|\eta^1 - \eta\|_{\infty,\beta,[-r,0]}\big)
	\end{eqnarray}
	for any $\eta^1$ in the vicinity of $\eta$ such that $\|\eta^1 - \eta\|_{\infty,\beta,[-r,0]} \leq h_1$. Finally, \eqref{thm4eq2} implies that $x_t(\cdot,\omega,\eta)$ is differentiable with respect to $\eta$, with its derivative to be $y_t(\cdot,\omega,\cdot)$ . 
\end{proof}

\begin{acknowledgement}
We would like to thank the anonymous referees for their careful reading and insightful remarks which led to improvement of our manuscript.
This research is partly funded by Vietnam National Foundation for Science and Technology Development (NAFOSTED) and by Thang Long university. 
\end{acknowledgement}

\end{document}